\title{Conical \texorpdfstring{SL(3)}{SL(3)} foams}
 \author{Mikhail Khovanov} 
 \address{Department of Mathematics, Columbia University, New York, NY 10027, USA}
 \email{\href{mailto:khovanov@math.columbia.edu}{khovanov@math.columbia.edu}}
 \author{Louis-Hadrien Robert}
 \address{Université du Luxembourg, RMATH, 5, avenue de la Fonte, L-4365 Esch-sur-Alzette, Luxemburg}
 \email{\href{mailto:louis-hadrien.robert@un.lu}{louis-hadrien.robert@uni.lu}}
\tikzset{->-/.style={decoration={markings, mark=at position .5 with {\arrow{>}}},postaction={decorate}}}
\tikzset{-<-/.style={decoration={markings, mark=at position .5 with {\arrow{<}}},postaction={decorate}}}
\let\oldtocsubsection\tocsubsection
\renewcommand\tocsubsection[3]{\hspace{0.5cm}\oldtocsubsection{#1}{#2}{#3}}
\let\oldtocsubsubsection\tocsubsubsection
\renewcommand\tocsubsubsection[3]{\hspace{1cm}\oldtocsubsubsection{#1}{#2}{#3}}
\let\emptyset\varnothing
\newcounter{res}[section]
\numberwithin{res}{section}
\newtheorem{lem}[res]{Lemma}
\newtheorem{prop}[res]{Proposition}
\theoremstyle{definition}
\newtheorem{dfn}[res]{Definition}
\newtheorem{rmk}[res]{Remark}
\newtheorem{exa}[res]{Example}
\newcommand{\imagesfolder}{Images}
\def\co{\colon\thinspace}
\newcommand{\NB}[1]{\ensuremath{\vcenter{\hbox{#1}}}}
\newcommand{\FF}{\ensuremath{\mathbb{F}}}
\newcommand{\ZZ}{\ensuremath{\mathbb{Z}}}
\newcommand{\CC}{\ensuremath{\mathbb{C}}}
\newcommand{\RR}{\ensuremath{\mathbb{R}}}
\renewcommand{\SS}{\ensuremath{\mathbb{S}}}
\newcommand{\Hom}{\mathop{\mathrm{Hom}}}
\newcommand{\rk}{\mathrm{rk}}
\newcommand\kup[1]{\left\langle #1 \right\rangle}
\newcommand{\vect}{\ensuremath{\mathsf{vect}}}
\newcommand{\md}{\ensuremath{\mathsf{mod}}}
\newcommand{\Foam}{\ensuremath{\mathsf{Foam}}}
\newcommand{\col}[2][{}]{\ensuremath{\mathrm{adm}_{#1}(#2)}} 
\newcommand{\R}{\ensuremath{R}} 
\newcommand{\Tait}{\mathrm{Tait}}
\newcommand{\cone}{\mathrm{Cone}}
\newcommand{\Z}{\mathbb{Z}}
\newcommand{\Gd}{\ensuremath{G_d}}
\newcommand{\lra}{\longrightarrow}
\newcommand{\kk}{\mathbf{k}}
\newcommand{\leftsquigarrow}{\ensuremath{\rotatebox[origin=c]{180}{\NB{$\rightsquigarrow$}}}}
\newcommand{\cfthetacol}[4][0.5]{\NB{\tikz[font=\tiny,scale=#1]{
\begin{scope}
\draw (0,1) arc (90:270:1cm) node[pos= 0.5, left] {$#2$};
\draw (0,1) arc (90:-90:1cm) node[pos= 0.5, right] {$#4$};
\draw (0,1) -- (0, -1) node[pos=0.5, right] {$#3$};
\end{scope}}}}
\date{November 22, 2020}
\subjclass[2020]{05C10, 05C15, 57M15, 57K16}
\begin{document}
\begin{abstract}
In the unoriented SL(3) foam theory, singular vertices are generic singularities of two-dimensional complexes. Singular vertices have neighbourhoods homeomorphic to cones over the one-skeleton of the tetrahedron, viewed as a trivalent  graph  on the two-sphere. In this paper we consider foams with singular vertices with neighbourhoods homeomorphic to cones over more general planar trivalent graphs. These graphs are subject to suitable conditions on their Kempe equivalence Tait coloring classes and include the dodecahedron graph. In this modification of the original homology theory it is straightforward to show that modules associated to the dodecahedron graph are free of rank 60, which is still an open problem for the original unoriented SL(3) foam theory. 
\end{abstract}
\maketitle
\tableofcontents
\section{Introduction}

\vspace{0.1in} 

Kronheimer and Mrowka \cite{KM1} defined a functor $J^\sharp$ from the category of knotted trivalent graphs (KTG) in $\RR^3$ and foams in $\RR^3\times [0,1]$ to the category of $\kk$-vector spaces, where $\kk=\FF_2$ is the two-element field. They proved that if $\Gamma$ is a bridgeless KTG embedded in the plane, then $J^\sharp(\Gamma)$ is nontrivial. Kronheimer and Mrowka proved that for such a KTG the inequality  $\dim(J^\sharp(\Gamma))\geq \left|\mathrm{Tait} (\Gamma)\right|$ holds and conjectured that it is an equality. Here $\mathrm{Tait}(\Gamma)$ is the set of Tait colorings  of $\Gamma$, that is, 3-colorings of edges of $\Gamma$ such that at each vertex the three colors are distinct.  The number of Tait colorings of $\Gamma$ is 
denoted $\left|\mathrm{Tait} (\Gamma)\right|$. 

This conjecture would imply the four color theorem (4CT) \cite{ApHa}. In the same paper, for plane graphs $\Gamma\subset \RR^2$, Kronheimer and Mrowka predicted the existence of a combinatorial analogue of $J^\sharp(\Gamma)$, denoted $J^\flat(\Gamma)$ in~\cite{KM1} and $\kup{\Gamma}$ here. The existence of $J^\flat(\Gamma)\cong \kup{\Gamma}$ was proved by the authors in~\cite{KR1}.

Homology groups $\kup{\Gamma}$, over all plane trivalent graphs $\Gamma$, extend to a functor from the category of foams with boundary to the category of $\Z$-graded modules over a suitable commutative ring. We denote this functor by $\kup{\bullet}$. 

In fact,  several versions of the functor $\kup{\bullet}$ are constructed in~\cite{KR1}. One of them takes values in the category of $\Z$-graded $\kk$-vector spaces, another one---in the category of graded free  $\kk[E]$-modules, where $\deg{E}=6$. Here $\kk$ is a characteristic two  field, which most of the time can be set to $\FF_2$. The most general homology of this type defined in~\cite{KR1} takes values in the category of graded modules over the ring $R=\kk[E_1,E_2,E_3]$ of symmetric polynomials in $X_1,X_2,X_3$, where
$E_1,E_2,E_3$ are the elementary symmetric  functions in these three  variables and $\deg(X_i)=2$, $i=1,2,3$. In this paper we use the following three theories.  
\begin{itemize}
    \item Homology theory $\kup{\bullet}$ with state spaces -- graded $R$-modules, with $R=\kk[E_1,E_2,E_3]$ as  above. This theory is based on 
    the original foam evaluation in~\cite{KR1}. 
    \item Homology theory $\kup{\bullet}_{0}$ with state spaces -- graded $\kk$-vector spaces. It is defined by evaluating closed foams $F$ to elements $\kup{F}_{0}$ of $\FF_2=\{0,1\}\subset \kk$ via the composition of the original evaluation  $\kup{F}\in R$ and the homomorphism $R\lra \kk\cong R/(E_1,E_2,E_3)$. This evaluation just picks the constant term in the evaluation $\kup{F}$ of a closed foam, viewed as a symmetric polynomial in $X_1,X_2,X_3$. 
    \item Homology theory $\kup{\bullet}_E$ with state spaces -- graded $\kk[E]$-modules.
This theory is defined via the evaluation  taking closed foams $F$ to elements $\kup{F}_{E}$ of $\kk[E]$ via the composition of the original evaluation  $\kup{F}\in R$ and the homomorphism $R\lra \kk[E]$ taking $E_1,E_2$ to $0$ and $E_3$ to $E$.
\end{itemize}

There are functorial in $\Gamma$ homomorphisms (base change homomorphisms) 
\begin{equation}
     \kup{\Gamma}\otimes_R \kk[E]\lra \kup{\Gamma}_E, \ \ \kup{\Gamma}\otimes_{R}\kk \lra \kup{\Gamma}_0  
     \, . 
\end{equation}

\begin{figure}
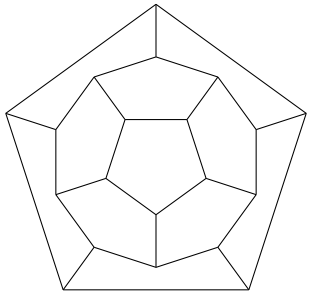

    \centering
    \NB{\tikz[]{\begin{scope}[yscale = {0.7}, xscale={0.7},decoration={markings, mark=at
  position 0.5 with {\arrow{>}}},postaction={decorate}]
\draw (18:3) -- (90:3);
\draw (90:3) -- (162:3);
\draw (162:3) -- ( 234:3);
\draw (234:3) -- (306:3);
\draw (306:3) -- (18:3);
\draw (18:3) -- (18:2);
\draw (90:3) -- (90:2);
\draw (162:3) -- (162:2);
\draw (234:3) -- (234:2);
\draw (306:3) -- (306:2);
\draw (-18:1) -- (54:1);
\draw (54:1) -- (126:1);
\draw (126:1) -- ( 198:1);
\draw (198:1) -- (270:1);
\draw (270:1) -- (-18:1);
\draw (-18:1) -- (-18:2);
\draw (54:1) -- (54:2);
\draw (126:1) -- (126:2);
\draw ( 198:1) -- ( 198:2);
\draw (270:1) -- (270:2);
\draw (-18:2) -- (18:2);
\draw (18:2) -- (54:2);
\draw (54:2) -- (90:2);
\draw (90:2) -- (126:2);
\draw (126:2) -- (162:2);
\draw (162:2) -- (198:2);
\draw (198:2) -- (234:2);
\draw (234:2) -- (270:2);
\draw (270:2) -- (306:2);
\draw (306:2) -- (-18:2);
\end{scope}}}
    \caption{The dodecahedral graph $\Gd$.}
    \label{fig:dodecahedron}
\end{figure}

The smallest graph for which none of $\kup{\bullet},\kup{\bullet}_E, \kup{\bullet}_{0}$, or $J^\sharp$ is fully understood is the dodecahedral graph depicted in Figure~\ref{fig:dodecahedron} and denoted $\Gd$. 

\vspace{0.1in} 

From~\cite[Proposition~4.18]{KR1} and the behaviour of the  theory under the graded localization $R\lra R[\mathcal{D}^{-1}]$, where $\mathcal{D}=E_3-E_1E_2$ is the square root of the discriminant, we know that $\kup{\Gd}_E$ is a free graded $\kk[E]$-module of rank $60$ (the number of Tait colorings of $G_d$), but we do not know its graded rank, which is an element of  $\ZZ_+[q,q^{-1}]$. The naive guess is that the graded rank of $\kup{\Gd}_E$ is $10[3][2]=10(q^{-2}+1+q^2)(q+q^{-1})$. 

Potentially, the rank or dimension of the state space of a graph $\Gamma$ can drop upon changing the ground ring due to the lack of commutativity between the base change operation on commutative rings and taking the quotient of the bilinear form on  a free module by its quotient. The simplest example is given by the bilinear form $(E)$ on rank one free $\kk[E]$-module $V$. The kernel  is zero and the quotient by the kernel is $V$. Under the base change $\kk[E]\lra \kk$ taking $E$ to $0$ bilinear form becomes $(0)$, and the quotient by the kernel is the trivial $\kk$-vector space. The latter is different from $V\otimes_{\kk[E]}\kk\cong \kk$, showing a drop in dimension.  

David Boozer~\cite{boozer2019computer} wrote a program to compute approximations to the modules $\kup{\Gd}$. His results show that $\dim \kup{\Gd}_{0}\ge 58$, which is, however, strictly smaller than 60, the number of Tait colorings of $\Gd$. Boozer's program looks at several thousand foams bounded by $\Gd$ and computes the rank of the bilinear form used to define $\kup{\bullet}_{0}$ restricted to the subspace spanned by these foams. The rank is $58$, not $60$, indicating  that the subspace spanned by these foams in $\kup{\Gd}_{0}$ has dimension $58$. An even stronger guess based on this data would be that $\kup{\Gd}_{0}$ has dimension $58$. Boozer's work also suggests~\cite{boozer2019computer} that the graded rank of $\kup{\Gd}_E$ is $9q^{-3} + 20q^{-1} + 20 q^1  + 11q^{3}$, rather than the  naive guess given by formula (\ref{eq_10_3}) below, 
and that the natural map $\kup{\Gd}_E\otimes_{\kk[E]}\kk\lra \kup{\Gd}_0$ has a nontrivial kernel. 

 It follows from~\cite{KM1,KR1} that $\kup{\Gamma}_0$ is naturally a subquotient of $J^{\sharp}(\Gamma)$ and $
 \dim(J^\sharp(\Gamma))\ge \dim(\kup{\Gamma}_0)$. 
 
Kronheimer and Mrowka~\cite{KM2, KM3} proved that  $\dim J^{\sharp}(\Gamma)\geq
 \left| \mathrm{Tait}(\Gamma) \right|$ for any graph $\Gamma$, in particular,
 $\dim J^{\sharp}(\Gd) \ge 60=\left| \mathrm{Tait}(\Gd) \right|$. 
 
This may suggest that $J^\sharp(\Gd) \ncong  \kup{\Gd}_{0}$. At the  same time, one
can prove that $\rk \kup{\Gamma}_E =
\left|\mathrm{Tait}(\Gamma)\right|$ for any $\Gamma$ and, consequently, $\kup{\Gamma}_E$ is a free graded  $\kk[E]$-module of rank $\left|\mathrm{Tait}(\Gamma)\right|$. The graded rank of $\kup{\Gamma}_E$, an element of $\Z_+[q,q^{-1}]$, seem complicated to determine or understand, though, already for $G_d$.

In this paper we enhance the
domain category of $\kup{\bullet}$ by extending the foams under
consideration to \emph{conical foams}. Category of foams $\Foam$ gets enlarged to the category $\Foam^c$ of conical foams that comes with 
evaluation functors $\kup{\bullet}^c$,  $\kup{\bullet}^c_0$ and $\kup{\bullet}^c_{E}$ into categories of graded modules over rings $R$, $\kk$  and $\kk[E]$, correspondingly.  The superscript $c$ stands for  \emph{conical}. 
One advantage of this construction is that for the dodecahedral graph the resulting homology groups have the desired size: 


\begin{prop}\label{prop:main}
    The graded dimension or rank of vector spaces or modules $\kup{\Gd}^c_0$, $\kup{\Gd}^c$,  and $\kup{\Gd}^c_E$ is   
    \begin{equation}\label{eq_10_3}
        10[2][3]=10(q+ q^{-1})(q^2 + 1 + q^{-2}).
    \end{equation}
\end{prop}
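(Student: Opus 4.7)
The plan is to prove all three claims in parallel by bounding $\kup{\Gd}^c_0$ from above and below by $10[2][3]$ and then transporting the conclusion to $\kup{\Gd}^c$ and $\kup{\Gd}^c_E$ via the base change homomorphisms described in the introduction.

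For the lower bound I would work first with $\kup{\Gd}^c_0$ and exhibit a graded subspace of dimension $10[2][3]$ as follows. The $60$ Tait colorings of $\Gd$ partition into $10$ Kempe equivalence classes, a structural feature that the conical foam theory is explicitly designed to leverage (as advertised in the abstract). For each such class I would construct an explicit family of foams bounded by $\Gd$ whose evaluations span a graded summand isomorphic to the theta-graph state space; since $\kup{\theta}$ has graded rank $[2][3]$, each class contributes $[2][3]$, and summing over the ten classes produces the desired lower bound.

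For the upper bound I would use the defining new feature of $\Foam^c$, namely that the cone $c(\Gd)$ on the dodecahedron is itself an admissible foam. Given any foam $F$ with $\partial F = \Gd$, capping off $F$ with $c(\Gd)$ produces a closed conical foam whose evaluation determines the class of $F$ in $\kup{\Gd}^c_0$. I would then show that, modulo the relations imposed by this new closed evaluation, every such $F$ can be rewritten as a linear combination of the Kempe class representatives from the first step. This bounds the graded dimension from above by $10[2][3]$ and, combined with the lower bound, establishes the formula for $\kup{\Gd}^c_0$.

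Finally, to pass to $\kup{\Gd}^c$ and $\kup{\Gd}^c_E$ I would use the base change maps $\kup{\Gd}^c \otimes_R \kk[E] \to \kup{\Gd}^c_E$ and $\kup{\Gd}^c \otimes_R \kk \to \kup{\Gd}^c_0$, together with the fact that the theta-graph module $\kup{\theta}$ is already a free graded $R$-module of rank $[2][3]$. Since the spanning set from the first step lifts compatibly across $\kk$, $\kk[E]$, and $R$ without collapse, the graded ranks must all coincide. The main obstacle is the upper bound in the second step: verifying that the single new generator $c(\Gd)$ produces enough relations to reduce every foam bounded by $\Gd$ to the span of the $10$ Kempe class representatives. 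This will require a careful analysis of how the cone interacts with the underlying evaluation and with the Kempe switches that permute Tait colorings inside a single class, and this is where I expect the substantive combinatorial and algebraic work to concentrate.
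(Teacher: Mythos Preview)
Your outline has the right ingredients---cones over $\Gd$, the ten Kempe classes, and the rank-$[2][3]$ state space of the $\Theta$-web---but two of the three steps, as written, would not go through.

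First, there is no single cone ``$c(\Gd)$'' in $\Foam^c$. A conical foam must be \emph{adorned}: the cone point carries a chosen homogeneous Kempe class of its link, so there are ten distinct adorned cones $C_\pm(\kappa_j)$, one per Kempe class of $\Gd$. More seriously, capping a foam $F$ by \emph{one} fixed cone yields a single linear functional on the state space, not ``the class of $F$''; one functional cannot separate a $60$-dimensional space, so this cannot give your upper bound. What the paper actually does is use all ten adorned cones together with the six $\Theta$-web dot patterns $x_i,y_i$ (from the proof of Proposition~\ref{prop:theta-space}) to write the identity foam $\Gd\times[0,1]$ as a sum of $60$ pairwise orthogonal idempotents $x_i\,C_-(\kappa_j)C_+(\kappa_j)\,y_i$. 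That single identity, verified at the level of closed foam evaluation, gives spanning and linear independence in one stroke; there is no separate upper/lower bound argument.

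Second, your transport step runs in the wrong direction. The base change maps $\kup{\Gd}^c\otimes_R\kk\to\kup{\Gd}^c_0$ are precisely the maps that can \emph{lose} rank---the introduction warns about this explicitly---so computing $\kup{\Gd}^c_0$ first does not bound the $R$-rank of $\kup{\Gd}^c$ from above. The paper avoids this entirely: since the idempotent decomposition is an identity of foam evaluations in $R$, it holds over $R$ and then specializes along any graded homomorphism $\phi\colon R\to S$, giving the result for $\kup{\Gd}^c$, $\kup{\Gd}^c_E$, and $\kup{\Gd}^c_0$ simultaneously. The correct order is $R$ first, then specialize---not $\kk$ first, then lift.
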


Foams of a new type introduced in this paper have singularities which are cones over planar trivalent graphs endowed with an equivalence class of Tait colorings of this graph satisfying a technical condition. Our construction does not answer the original question about the difference in dimensions for 
$J^\sharp(\Gd)$ and $\kup{\Gd}_{0}$. Rather, it enlarges the state spaces $\kup{\Gamma}_0$ to $\kup{\Gamma}^c_0$ to achieve the desired dimension $60$ for the dodecahedral graph.


We do not know whether $\kup{\Gamma}^c$ is larger than $\kup{\Gamma}$ for some graph  $\Gamma$. We also don't know the answer to the same question for the $\kup{\Gamma}_0^c$ versus
$\kup{\Gamma}_0$ or $\kup{\Gamma}^c_E$ versus $\kup{\Gamma}_E$. The first potential example to further investigate is $\Gd$. 


  In section~\ref{sec:conical-foams}  the foam evaluation from \cite{KR1} is extended to conical foams. We construct functors $\kup{\bullet}^c$,  $\kup{\bullet}^c_0$ and $\kup{\bullet}^c_{E}$ from the category of conical foams $\Foam^c$ to categories of graded $R$-modules, $\kk$-vector spaces and $\kk[E]$-modules. 
  In section~\ref{sec:dodecahedron} we compute the state spaces of the dodecahedron and prove Proposition~\ref{prop:main}. In section~\ref{sec:kempe} we discuss the technical condition which appear in the definition of conical foams.

\vspace{0.1in} 

{\bf Acknowledgments:} M.K. was partially supported by the NSF grant DMS-1807425 while working on this paper.



\section{Conical foams}
\label{sec:conical-foams}
\subsection{Tait colorings and Kempe moves}
\label{sec:tait-kempe}
A \emph{web} is a finite trivalent graph PL-embedded in $\RR^2$. It may have multiple edges between a  pair of  vertices and  loops. Furthermore, loops without vertices are allowed  as well. A \emph{Tait coloring} of a web $\Gamma= (V(\Gamma), E(\Gamma))$ is a map $c\co E(\Gamma) \to I_3$ from  the  set of  edges to the 3-element set $I_3=\{1,2,3\}$  such that at each vertex the three adjacent edges are mapped to distinct elements in $I_3$. A vertexless loop counts as an edge, an element of $E(\Gamma)$, and under $c$ is also mapped to an element of $I_3$. The set of Tait colorings of a given web $\Gamma$ is denoted by $\Tait(\Gamma)$. There is a natural action of $S_3$ on $\Tait(\Gamma)$ by permutations of colors. 

If $c$ is a coloring of $\Gamma$ and $i$ and $j$ are two distinct elements of $\{1,2,3\}$, denote by $\Gamma_{ij}(c)$ the graph $(V(\Gamma), E_{ij}(\Gamma,c))$, where
\[
E_{ij}(\Gamma,c) = \left\{ e \in E(\Gamma) \  | \ c(e) \in \{i, j\} \right\}.
\]
The graph $\Gamma_{ij}(c)$ is a bivalent graph and therefore a disjoint union of cycles of even lengths, possibly including length zero (when a circle of $\Gamma$ is colored $i$ or $j$). The number of such cycles is denoted $d_{ij}(\Gamma,c)$. 

Let $C$ be a connected component of $\Gamma_{ij}(c)$. Swapping the colors $i,j$ of $c$ along $C$ provides a new coloring $c'$ of $\Gamma$. One says that $c$ and $c'$ are related by a \emph{Kempe move along $C$}. Two elements $c$ and $c'$ of $\Tait(\Gamma)$ are \emph{Kempe equivalent} if there exists a finite sequence of Kempe moves transforming $c$ into $c'$, possibly for different pairs $(i,j)$. Kempe equivalence is an equivalence relation, with equivalence classes called \emph{Kempe classes}. 

\begin{prop}[{\cite[Theorem 1]{FISK1977298}}]
\label{prop:bipartite-KE}
If a web $\Gamma$ is bipartite then all Tait colorings are Kempe equivalent. 
\end{prop}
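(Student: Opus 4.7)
The plan is to deduce this from Fisk's original theorem on Kempe equivalence of proper vertex $4$-colorings of Eulerian triangulations of $S^2$, via planar duality. Fisk's setting is $4$-colorings of Eulerian triangulations of $S^2$, while ours is Tait colorings of bipartite trivalent plane webs, and the two are linked by planar duality.

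First, I would set up the duality. The planar dual $\Gamma^{*}$ of a connected web $\Gamma$ is a plane triangulation, since $\Gamma$ is trivalent. Bipartiteness of $\Gamma$ is equivalent to every face of $\Gamma$ having even length, equivalently every vertex of $\Gamma^{*}$ having even degree; that is, $\Gamma^{*}$ is Eulerian. Identifying $\{1,2,3\}$ with the three nonzero elements of $(\ZZ/2)^{2}$, every proper vertex $4$-coloring $\varphi$ of $\Gamma^{*}$ (equivalently, $4$-face-coloring of $\Gamma$) yields a Tait coloring of $\Gamma$ by $c(e)=\varphi(f_{1})+\varphi(f_{2})$, where $f_{1},f_{2}$ are the two faces incident to $e$; conversely, every Tait coloring arises in this way, uniquely up to global translation of $\varphi$ by $(\ZZ/2)^{2}$.

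Second, I would translate Kempe moves. A Kempe swap of colors $i,j$ along an $\{i,j\}$-bichromatic cycle $C$ in $\Gamma$ corresponds, on the dual side, to shifting the face colors on one of the two regions of $S^{2}\setminus C$ by the element $i+j\in(\ZZ/2)^{2}$. Modulo the translation action of $(\ZZ/2)^{2}$, this operation is generated by standard Kempe chain swaps of proper vertex $4$-colorings of $\Gamma^{*}$. Hence Tait-Kempe equivalence on $\Gamma$ coincides with vertex-Kempe equivalence on $\Gamma^{*}$ at the level of equivalence classes. Finally, Fisk's theorem---any two proper vertex $4$-colorings of an Eulerian triangulation of $S^{2}$ are Kempe equivalent---concludes the proof.

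The main obstacle is the second step: a shift by $i+j$ on one region simultaneously transposes two pairs of colors in $(\ZZ/2)^{2}$ and is not literally a single vertex Kempe swap, so one must carefully exhibit its effect (possibly after composing with a global translation) as a finite sequence of standard Kempe chain swaps on $\Gamma^{*}$. A secondary technicality is that webs with multiple edges or loops dualize to non-simple Eulerian triangulations, requiring a modest extension of Fisk's statement to that setting. Beyond this bookkeeping, the deep content is Fisk's inductive geometric reduction, which contracts even-degree vertices of the triangulation to reduce to a strictly smaller Eulerian triangulation.
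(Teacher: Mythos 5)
The paper offers no proof of this proposition: it is quoted directly as Theorem~1 of Fisk's \emph{Geometric coloring theory}, so your write-up is best read as a justification of why that citation applies. Your route --- dualize to an Eulerian triangulation, identify Tait colorings with proper $4$-colorings valued in $(\ZZ/2)^2$ via $c(e)=\varphi(f_1)+\varphi(f_2)$, match the two notions of Kempe move, and invoke Fisk --- is correct, and the ``main obstacle'' you flag does resolve. The key observation is a color count: if $C$ is an $\{i,j\}$-bichromatic cycle and $k=i+j$, then the edges of $\Gamma$ dual to the $\{0,k\}$- and $\{i,j\}$-edges of $\Gamma^*$ are exactly the $k$-colored edges of $\Gamma$, none of which lie on $C$; hence every component of those two bichromatic subgraphs of $\Gamma^*$ lies entirely on one side of $C$, and ``add $k$ to all faces inside $C$'' is literally the commuting product of the Kempe swaps on the components contained in that region (no global translation is even needed). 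Note, though, that for the proposition itself the direction you must verify is the opposite one: given Fisk's chain of vertex-Kempe swaps joining $\varphi$ to $\varphi'$, each swap on a component $D$ must descend to Tait-Kempe moves on $\Gamma$. This also works: every boundary edge of $D$ is colored $i$ or $j$ (where $\{i,j\}$ is determined by the swapped pair as above), and at each vertex of $\Gamma$ met by $\partial D$ exactly the $i$- and $j$-colored edges lie on $\partial D$, so $\partial D$ is a union of full components of $\Gamma_{ij}(c)$ and the swap descends to a product of $(i,j)$-Kempe moves. Your remaining caveats are real but minor: one reduces to connected $\Gamma$ (Kempe classes of a disjoint union are products of classes), webs with loops or bridges have no Tait colorings so the statement is vacuous for them, vertexless circles are handled by inspection, and digon faces (parallel edges in $\Gamma^*$) can be removed first using the $2$-to-$1$ map of Proposition~\ref{prop:digon}, which intertwines Kempe equivalence.
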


A Kempe class $\kappa$ is called \emph{homogeneous} if  the sum 
\begin{equation}\label{eq:degree_class} 
d(\Gamma,c):=d_{12}(\Gamma,c) + d_{23}(\Gamma, c) + d_{13}(\Gamma,c)
\end{equation} 
is independent of the coloring $c \in \kappa$. This integer is called the \emph{degree of $\kappa$}, for a homogeneous $\kappa$, and  denoted by $d(\kappa)$ or $d(\Gamma,\kappa)$. 
\begin{itemize}
    \item 
A web is called \emph{weakly homogeneous} if it admits at least one homogeneous Kempe class.
\item 
A web is called \emph{semi-homogeneous} if every Kempe class of the web is homogeneous. 
\item 
A web $\Gamma$ is called \emph{homogeneous of degree $d$} if it is homogeneous and each Kempe class of $\Gamma$ has the same degree $d$. For such webs, $d(\Gamma):= d$ is called the \emph{Kempe-degree of $\Gamma$}. 
\end{itemize}

\begin{exa}\label{exa:graded-Kempe}
\begin{enumerate}
    \item The simplest homogeneous webs are the empty web, denoted $\emptyset_1$ and the circle web, of degrees $0$ and  $2$, respectively. The empty web has a unique Tait coloring and its Kempe class has degree $0$. The circle web admits three Tait colorings that are all Kempe equivalent. This unique Kempe class $\kappa$ is homogeneous  and $d(\kappa)=2$.
    \item The $\Theta$-web is the web with two vertices and three edges connecting  them. It admits six Tait colorings which are all Kempe equivalent. This unique Kempe class $\kappa$ is homogeneous, with $d(\kappa)=3$,  so the  $\Theta$-web is  homogeneous of Kempe-degree three.
    \item The tetrahedral web $K_4$ is the web obtained by embedding the complete graph on 4 vertices into the plane. It admits six Tait colorings which are all Kempe equivalent. The unique Kempe class $\kappa$ is homogeneous and $d(\kappa)=3$.
    \item The cubic web $C_3$ is the embedding of the 1-skeleton of the 3-dimensional cube in the plane. It admits 24 Tait colorings. Since it is bipartite, all its Tait colorings are Kempe equivalent. However, $C_3$ has Tait colorings $c$ and $c'$ with different degrees $d(C_3,c)=6$ and 
    $d(C_3,c')=4$, see formula (\ref{eq:degree_class}) and Figure~\ref{fig:cube-non-hom}.
    \begin{figure}
        \centering
        \NB{\tikz[scale=0.8]{\input{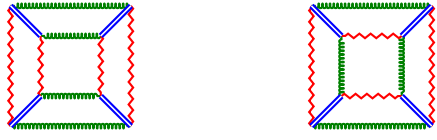}}}
        \caption{Two Tait colorings $c$ and $c'$ of the cube $C_3$. They are related by a Kempe move along the inner square. One has $d(C_3, c) = 2+ 2+2$ and $d(C_3, c') = 2+ 1 +1$.}
        \label{fig:cube-non-hom}
    \end{figure}
    This implies that the cube graph $C_3$ is not weakly homogeneous. 
    \item Section~\ref{sec:dodecahedron} investigates the dodecahedral graph $G_d$. The latter turns out to admit 60 Tait colorings which are partitioned into 10 Kempe classes, each homogeneous of degree 3. Consequently, $G_d$ is a homogeneous web of Kempe-degree three.  
\end{enumerate}
\end{exa}
\begin{figure}
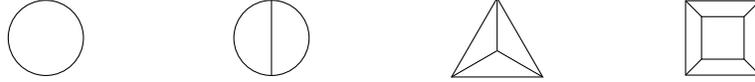

    \centering
    \NB{\tikz[]{\begin{scope}[scale =1]
  \begin{scope}[xshift = 0cm]
    \draw (0,0) circle (0.5cm);
  \end{scope}
  \begin{scope}[xshift = 3cm]
    \draw (0,0) circle (0.5cm);
    \draw (0, -.5) -- + (0,1);
  \end{scope}
  \begin{scope}[xshift = 6cm, yshift = -0.17cm]
    \draw (0,0) -- (90:0.7);
    \draw (0,0) -- (210:0.7);
    \draw (0,0) -- (330:0.7);
    \draw (90:0.7) -- (210:0.7) -- (330:0.7) -- (90:0.7);
  \end{scope}
  \begin{scope}[xshift = 9cm]
    \draw (45:0.4)   -- (45:0.7);
    \draw (-45:0.4)  -- (-45:0.7);
    \draw (135:0.4)  -- (135:0.7);
    \draw (-135:0.4) -- (-135:0.7);
    \draw (45:0.4) -- (135:0.4) -- (-135:0.4) -- (-45:0.4) -- (45:0.4);
    \draw (45:0.7) -- (135:0.7) -- (-135:0.7) -- (-45:0.7) -- (45:0.7);
  \end{scope}
\end{scope}}}
    \caption{From left to right: the circle, the $\Theta$-web, the tetrahedron and the cube.}
    \label{fig:exa-web}
\end{figure}

Define a \emph{cone} over a web $\Gamma$ in $\RR^2$ by considering $\RR^2$ as the boundary of $\RR^3_+$. Choose a point $p$ in the interior of $\RR^3_+$ and form the cone $\cone(\Gamma)$
over $\Gamma$ by connecting $p$ by straight intervals to all points of $\Gamma$. As a  topological space, $\cone(\Gamma)\cong \Gamma\times [0,1]\left/\Gamma\times\{0\}\right.$, and  comes with an embedding into $\RR^3$. The space $\cone(\Gamma)$ may have a singularity at $p$ which is more complicated than singular points of foams in~\cite{KR1}.  The cone inherits from $\Gamma$ the structure of a two-dimensional combinatorial CW-complex. The \emph{pointed cone}  $\cone_\star(\Gamma)$ is the pair $(\cone(\Gamma), p)$.

\subsection{Foams}
\label{sec:foams}
A \emph{conical foam} is a finite 2-dimensional CW-complex $F$ PL-embedded in $\RR^3$ such that for any point $p$ in $F$ there exists a small 3-ball $B$ centered in $p$ such that $(F\cap B, p)$ is PL-homeomorphic to a pointed cone over a connected web $\Gamma(p)$ in $\SS^2$. Note that the PL-isotopy type of $\Gamma(p)\subset \SS^2$ is well-defined. The web $\Gamma(p)$ is the link of the vertex $p$ of $F$. 


A point $p$ in $F$ is \emph{regular} if $\Gamma(p)$ is a circle, it is a \emph{seam point} if $\Gamma(p)$ is a $\Theta$-web, otherwise it is a \emph{singular point}. 

If for any singular point $p$ of $F$, $\Gamma(p)$ is a tetrahedral graph $K_4$, then this definition of a foam is the same as in \cite{KR1}. Such foams are called \emph{regular} foams. The union of seams and singular points of $F$ is denoted $s(F)$. The set $s(F)$ inherits from $F$ a structure of finite 1-dimensional CW-complex except possibly for several circles in $s(F)$ that do not have singular points on them. 
We view $s(F)$ as a graph (possibly with multiple edges, loops, and circular edges without vertices). Vertices of $s(F)$ are the  singular points of $F$. W call $s(F)$ the \emph{seam graph} of $F$. 

The connected components of $F\setminus s(F)$ are the \emph{facets} of $F$. Denote by $f(F)$ the set of facets of $F$. A foam may carry a finite number of dots on its facets, which can freely float on a facet but are not allowed to move across seams into adjacent facets.

An conical foam $F$ is \emph{adorned} if for every singular point $p$ of $F$ there is a preferred Kempe class $\kappa(p)$ of $\Gamma(p)$. An adorned conical foam $F$ is \emph{homogeneous} if for any singular point $p$ of $F$, $\kappa(p)$ is homogeneous.

\begin{figure}
    \centering
    \NB{\tikz[]{\input{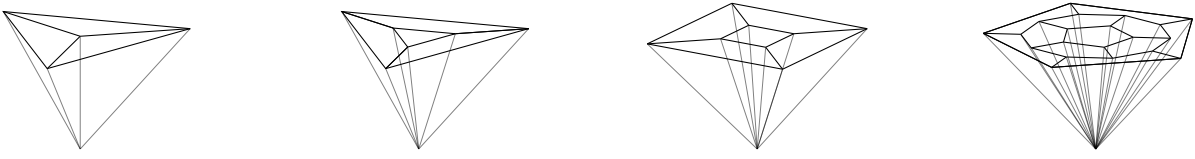}}}
    \caption{Example of possible singular points appearing in conical foams.}
    \label{fig:my_label}
\end{figure}
\subsection{Colorings}
\label{sec:colorings}

Let $F$ be an adorned conical foam. An \emph{admissible coloring} of $F$ (or simply a \emph{coloring}) is a map  $c\co f(F)\to \{1,2,3\}$ such that:
\begin{itemize}
    \item For each seam  of $F$, the three facets $f_1$, $f_2$ and $f_3$ adjacent to the seam carry different colors, that is, $\{c(f_1), c(f_2), c(f_3)\} = \{1,2,3\}$.
    \item For each singular point $p$ of $F$, the Tait-coloring $c_{\Gamma(p)}$ of $\Gamma(p)$ induced by $c$ is in the Kempe class $\kappa(p)$.
\end{itemize}
The set of admissible coloring of $F$ is denoted $\col{F}$.

Let $c$ be an admissible coloring of $F$. For  two distinct elements $i\neq j$ of $\{1,2,3\}$ denote by $\widetilde{F}_{ij}(c)$ the closure in $F$ of the union of facets $f$ such that $f(c) \in \{i,j\}$. This is a CW-complex embedded in $\RR^3$. Near seam points and regular points contained in $\widetilde{F}_{ij}(c)$, the space $\widetilde{F}_{ij}(c)$ is locally PL-homeomorphic to a disk. Near a singular point $p$, $\widetilde{F}_{ij}(c)$ is PL-homeomorphic to a cone over a disjoint union of $d_{ij}(\Gamma(p), c_{\Gamma(p)})$ circles.

To \emph{smooth out $\widetilde{F}_{ij}(c)$  at
$p$} means to replace this cone by a collection of $d_{ij}(\Gamma(p),
c_{\Gamma(p)})$ embedded disks by separating the discs that meet at the vertex $p$.  An example of this operation is given in  Figure~\ref{fig:smooth}.
Note that if $d_{ij}(\Gamma(p), c_{\Gamma(p)}) =1$, there is only one disk near $p$ and this procedure does not change the neighbourhood of $p$.  
\begin{figure}
    \centering
    \NB{\tikz[scale=0.75]{\input{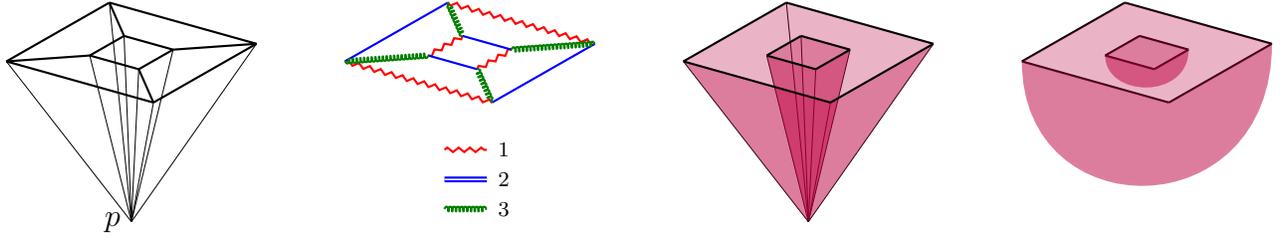}}}
    \caption{Smoothing out $\widetilde{F}_{12}(c)$ at a singular point $p$. From left to right: a neighborhood $U$ of singular point $p$; the coloring induced by $c$ on its link $\Gamma(p)$; the space $\widetilde{F}_{12}(c) \cap U$; the surface ${F}_{12}(c)\cap U$. Note, though, that graph $\Gamma(p)$ is not even weakly homogeneous.}
    \label{fig:smooth}
\end{figure}
The \emph{bicolored surface} $F_{ij}(c)$ is obtained by smoothing out $\widetilde{F}_{ij}(c)$ at every singular point of $F$. It is a surface embedded in $\RR^3$ and therefore has even Euler characteristic.

\begin{lem}
\label{degree}
For   an homogeneous adorned conical foam $F$ and its coloring $c$ the following relation holds:
\begin{align}\label{eq:degfoam}
\chi\left( F_{12}(c)\right) +
\chi\left( F_{13}(c)\right) +
\chi\left( F_{23}(c)\right) = 3\chi(s(F)) + 2\sum_{f\in f(F)} \chi(f) +  \sum_{p \in p(F)} (d(\kappa(p)) -3).
\end{align}
In particular, this quantity is independent from the admissible coloring $c$.
\end{lem}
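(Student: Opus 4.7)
The plan is to compare $\chi(F_{ij}(c))$ with $\chi(\widetilde{F}_{ij}(c))$ via a local correction at each singular point, compute $\chi(\widetilde{F}_{ij}(c))$ by stratifying $F$ into the seam graph and the open facets, and then sum over the three color pairs $\{i,j\}$. Homogeneity of each $\kappa(p)$ will ensure that the singular-point contributions combine into a quantity independent of the coloring.

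First, I would analyze a neighborhood of a singular point $p$. Since $c$ is admissible, the induced Tait coloring $c_{\Gamma(p)}$ lies in $\kappa(p)$, so the bichromatic subgraph $\Gamma(p)_{ij}(c_{\Gamma(p)})$ is $2$-regular and decomposes into $d_{ij}(\Gamma(p),c_{\Gamma(p)})$ disjoint cycles. A small neighborhood of $p$ in $\widetilde{F}_{ij}(c)$ is therefore a cone over this disjoint union of cycles, namely a wedge of $d_{ij}$ disks joined at $p$, of Euler characteristic $1$. Smoothing replaces this wedge by $d_{ij}$ disjoint disks, of total Euler characteristic $d_{ij}$. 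Summed over singular points this yields
\[ \chi(F_{ij}(c)) = \chi(\widetilde{F}_{ij}(c)) + \sum_{p \in p(F)} \bigl(d_{ij}(\Gamma(p),c_{\Gamma(p)}) - 1\bigr). \]

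Second, I would compute $\chi(\widetilde{F}_{ij}(c))$ using the stratification $F = s(F) \sqcup \bigsqcup_{f \in f(F)} f^{\circ}$ into the closed seam graph and the open facets. Because each seam is incident to facets of all three colors, $s(F) \subset \widetilde{F}_{ij}(c)$, so $\widetilde{F}_{ij}(c) = s(F) \sqcup \bigsqcup_{c(f) \in \{i,j\}} f^{\circ}$. The abstract boundary of a facet (as a compact surface with boundary) is a disjoint union of circles, so the compactly supported Euler characteristic satisfies $\chi_c(f^{\circ}) = \chi(f)$, and additivity of $\chi_c$ over locally closed strata gives $\chi(\widetilde{F}_{ij}(c)) = \chi(s(F)) + \sum_{c(f) \in \{i,j\}} \chi(f)$. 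Summing over the three pairs, $s(F)$ is counted three times and each facet appears in exactly two of the bicolored sets, so
\[ \sum_{\{i,j\}} \chi(\widetilde{F}_{ij}(c)) = 3\chi(s(F)) + 2 \sum_{f \in f(F)} \chi(f). \]

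Finally, homogeneity of $\kappa(p)$ yields $\sum_{\{i,j\}} d_{ij}(\Gamma(p),c_{\Gamma(p)}) = d(\kappa(p))$ for every admissible $c$, so summing the corrections in Step 1 over the three pairs produces $\sum_{p \in p(F)} (d(\kappa(p)) - 3)$. Combining the three steps gives the stated identity, and since its right-hand side depends only on $F$ and the Kempe classes $\kappa(p)$, this also establishes independence from $c$. The only delicate point is the local analysis at a singular point: one must recognise that the link subgraph $\Gamma(p)_{ij}$ is $2$-regular (a general feature of Tait colorings restricted to two colors), so that the local cone is genuinely a wedge of $d_{ij}$ disks and smoothing contributes exactly $d_{ij} - 1$ to the Euler characteristic. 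The remainder is routine bookkeeping via additivity of $\chi_c$.
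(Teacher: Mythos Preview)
Your proof is correct and follows essentially the same two-step strategy as the paper: compute $\sum_{\{i,j\}}\chi(\widetilde{F}_{ij}(c))$ from the decomposition $\widetilde{F}_{ij}(c)=s(F)\cup(\text{facets colored }i\text{ or }j)$, then correct by $d_{ij}-1$ at each singular point. The only cosmetic differences are that you perform the two steps in the opposite order and phrase the first step via additivity of compactly supported Euler characteristic on a stratification, whereas the paper phrases it as attaching facets to $s(F)$ in a CW sense; both yield $\chi(\widetilde{F}_{ij}(c))=\chi(s(F))+\sum_{c(f)\in\{i,j\}}\chi(f)$ and the rest is identical.
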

\begin{proof}
Let us first prove that
\begin{align}\label{eq:degtilde}
\chi( \widetilde{F}_{12}(c)) +
\chi( \widetilde{F}_{13}(c)) +
\chi( \widetilde{F}_{23}(c)) = 3 \chi(s(F)) + 2\sum_{f\in f(F)} \chi(f) .
\end{align}

$\widetilde{F}_{12}(c)$ is a finite CW-complex which can be constructed starting with $s(F)$ and attaching to it $1$- and $2$-colored facets of $F$. Hence, 
\[
\chi(\widetilde{F}_{12}(c)) = \chi(s(F)) + \sum_{f \in f_1(F,c)} \chi(f) + \sum_{f \in f_2(F,c)}\chi(f),\]
where $f_i(F,c)$ denotes the set of $i$-colored facets of $F$ with the coloring $c$. The same holds for $\widetilde{F}_{13}(c)$ and $\widetilde{F}_{23}(c)$. Summing over these identities, one obtains (\ref{eq:degtilde}).

Let $p$ be a singular point of $F$. 
Smoothing out $\widetilde{F}_{ij}(c)$ at $p$ amounts to removing a point in $\widetilde{F}_{ij}$ and replacing it with $d_{ij}(\Gamma(p),c_{\Gamma(p)})$ points: it increases the Euler characteristic by $d_{ij}(\Gamma(p),c_{\Gamma(p)})-1$. This proves (\ref{eq:degfoam}). 
\end{proof}

The \emph{degree} $d(F)$ of an homogeneous adorned foam $F$ is given by the following formula:
\begin{equation}
d(F) = 2\#\{\text{dots on $F$}\} - 3\chi(s(F)) - 2\sum_{f\in f(F)} \chi(f) -  \sum_{p \in p(F)} (d(\kappa(p)) -3).
\end{equation} 


\subsection{Brief review of foam evaluation}
\label{sec:review-ev}
Foam evaluation was introduced in~\cite{RW1} and adapted by the authors~\cite{KR1} to the unoriented $SL(3)$ case. In this section we briefly review this construction in the generalized framework of homogeneous adorned conical foams. Let
\begin{equation}
    R'=\kk[X_1, X_2, X_3]
\end{equation}
be the graded ring of polynomials in $X_1,X_2,X_3$, with each $X_i$ in degree $2$. Denote by $E_1, E_2$ and $E_3$ the elementary symmetric  polynomials in $X_1,X_2,X_3$. They have degrees $2, 4$ and $6$ respectively and generate the subring of $S_3$-invariants 
\begin{equation}
     R \ := \ (R')^{S_3} \simeq \kk [E_1, E_2, E_3]. 
\end{equation}
Let  
\begin{equation}
    R'' \ := \  R'\left[ \frac{1}{X_i+X_j}\right]_{1\le i<j\le 3}.
\end{equation}

Finally, for $i\neq j$, denote by $\R''_{ij}\subset R''$ the graded ring $R'\left[ \frac{1}{(X_i + X_k)(X_j + X_k)}\right]$, for $\{i,j,k\}=\{1,2,3\}$.  One has $R'= R''_{12} \cap R''_{13} \cap R''_{23}$. There are inclusions of rings 
\begin{equation}
    R \subset R' \subset R'_{ij}\subset R''. 
\end{equation}




From now on a \emph{conical foam} means a  homogeneous adorned conical foam. Let $F$ be a conical foam and $c$ an admissible coloring of $F$.

Define 
\begin{align}
  Q(F,c) &= (X_1 + X_2)^{\frac{\chi(F_{12}(c))}{2}}  (X_2 +
  X_3)^{\frac{\chi(F_{23}(c))}{2}}  (X_1 +
  X_3)^{\frac{\chi(F_{13}(c))}{2}}, 
\label{eq:Q}
  \\ 
  P(F,c) &= \prod_{f\in f(F)} X^{n(f)}_{c(f)}, \label{eq:P}
\end{align}
where $n(f)$ denotes the number of dots on the facet $f$. The
\emph{evaluation} of the colored conical foam $(F,c)$ is the rational function $\kup{F,c}$ given by:
\[
  \kup{F,c}:= \frac{P(F,c)}{Q(F,c)} \in R''.
\]
The \emph{evaluation} of the conical foam $F$ is defined by:
\[
\kup{F} = \sum_{c\in \col{F}} \kup{F,c}.
\]

\begin{prop}[{\cite[Theorem 2.17]{KR1}}]
For any conical foam $F$ the evaluation $\kup{F}\in R$ is a symmetric polynomial in $X_1$, $X_2$ and $X_3$. It is homogeneous of degree $d(F)$.
\end{prop}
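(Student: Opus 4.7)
The plan is to adapt the proof of \cite[Theorem~2.17]{KR1} (itself going back to Robert--Wagner) to the conical setting. The statement decomposes into three assertions, which I would treat in turn: $\kup{F}$ is a polynomial (i.e.\ lies in $R'$), it is $S_3$-symmetric (hence lies in $R = (R')^{S_3}$), and it is homogeneous of degree $d(F)$. The only genuinely new ingredient compared with \cite{KR1} is the control of singular vertices through their Kempe-class adornments.

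Homogeneity is immediate from Lemma~\ref{degree}: each summand $\kup{F,c}$ is a homogeneous rational function with $\deg P(F,c)=2\#\{\text{dots on }F\}$ and $\deg Q(F,c)=\chi(F_{12}(c))+\chi(F_{13}(c))+\chi(F_{23}(c))$, and the latter is coloring-independent by Lemma~\ref{degree}; subtracting gives $\deg \kup{F,c}=d(F)$ for every admissible $c$, hence for the whole sum.

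For the polynomial assertion I would use the decomposition $R'=R''_{12}\cap R''_{13}\cap R''_{23}$ stated in the excerpt and prove $\kup{F}\in R''_{ij}$ for each pair $\{i,j\}\subset\{1,2,3\}$ separately. Fix such a pair; for $c\in\col{F}$ and a connected component $S$ of the bicolored surface $F_{ij}(c)$, define $c_S\co f(F)\to\{1,2,3\}$ by swapping the values $i$ and $j$ on exactly the facets that contribute to $S$. The critical new verification is that $c_S$ is again admissible: at each singular vertex $p$ on $S$, the swap realises a composition of Kempe moves on $c_{\Gamma(p)}$ along the $\{i,j\}$-bichromatic cycles of $\Gamma(p)$ traced out by $S$, so $(c_S)_{\Gamma(p)}$ is Kempe-equivalent to $c_{\Gamma(p)}$ and remains in $\kappa(p)$. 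With admissibility in hand I would organise $\col{F}$ into orbits of the elementary abelian $2$-group generated by these component swaps and check that within each orbit the sum of evaluations lies in $R''_{ij}$. This last step is a purely local pole-cancellation computation around each component of $F_{ij}(c)$, essentially identical to the one in \cite{RW1,KR1}; the only bookkeeping affected by the singular vertices is the Euler-characteristic count, which is governed by Lemma~\ref{degree}.

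Symmetry is then short: direct inspection of the formulas for $P(F,c)$ and $Q(F,c)$ gives $\sigma\cdot\kup{F,c}=\kup{F,\sigma\circ c}$ for any $\sigma\in S_3$, and since Kempe equivalence is $S_3$-equivariant, $\sigma$ permutes the Kempe classes of each $\Gamma(p)$. Assuming the adornment is $S_3$-stable (which is the case in the examples of interest, e.g.\ the dodecahedron, where each Kempe class is itself a single $S_3$-orbit), $\sigma$ permutes $\col{F}$, so $\sigma\cdot\kup{F}=\kup{F}$ and $\kup{F}\in R$. The principal obstacle in executing the plan is the Kempe-move identification for $c_S$: once that local admissibility check is in place, the rest is a direct translation of the existing foam-evaluation machinery, with the homogeneity condition on each $\kappa(p)$ playing precisely the role of keeping the Euler-characteristic sums coloring-independent throughout.
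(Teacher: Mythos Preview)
Your outline is correct and matches the paper, which does not give a proof but simply asserts that the argument of \cite[Theorem~2.17]{KR1} extends straightforwardly to conical foams; the two new checks you single out (admissibility of $c_S$ via Kempe moves at each singular vertex, and the Euler-characteristic balance from Lemma~\ref{degree} feeding the pole cancellation) are exactly the points where the extension needs to be verified.

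One point deserves tightening: your caveat ``assuming the adornment is $S_3$-stable'' is unnecessary, and hedging it to ``the examples of interest'' would leave the general statement unproved. Applying a transposition $(ij)\in S_3$ to a Tait coloring of $\Gamma(p)$ is precisely the composition of Kempe moves along \emph{all} $(i,j)$-bicolored cycles of $\Gamma(p)$; since $S_3$ is generated by transpositions, every Kempe class is automatically $S_3$-invariant, and $\sigma\circ c$ is admissible whenever $c$ is. With that observation the symmetry step is unconditional.
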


The proof in \cite{KR1} extends to conical foams in a straightforward way. The above evaluation coincides with the one in~\cite{KR1} for the usual foams. 

\subsection{State spaces}
\label{sec:staspa}

Using conical foam evaluation we construct three functors $\kup{\bullet}^c$, $\kup{\bullet}^c_E$ and $\kup{\bullet}_0^c$ from the category $\Foam^c$ described below to categories $R$-$\md$, $\kk[E]$-$\md$ and $\kk$-$\vect$. The ground field $\kk$ can be taken to be $\FF_2$, but occasionally it is useful to take an arbitrary  characteristic two field as $\kk$.

The category $\Foam^c$ has webs (either in $\RR^2$ or $\SS^2$) as objects. If $\Gamma_0$ and $\Gamma_1$ are two webs, then $\Hom_{\Foam^c}(\Gamma_0, \Gamma_1)$ consists of homogeneous adorned conical foams $U$ with boundary properly embedded in $\RR^2 \times [0,1]$ (respectively, in $\SS^2\times [0,1]$) such that \[\partial U = U\cap\{0,1\} = \Gamma_0 \times\{0\} \cup \Gamma_1 \times \{1\}.\]
Isotopic rel boundary conical foams define the same morphism in $\Foam^c$. 
Composition is given by concatenation. 

The category $\Foam$ is a subcategory of $\Foam^c$ that consists of foams in the usual sense, as in~\cite{KR1}. It has the same objects as $\Foam^c$, and a morphism $F$ in $\Foam^c$ is a morphism in $\Foam$ if for every singular point $p$ of $F$, $\Gamma(p)$ is the tetrahedral graph $K_4$. Note that since $K_4$ has only one Kempe class of Tait colorings, there is only one way to adorn singular point $p$. Such a singular point is the standard singular point on a foam as defined in~\cite{KR1}, and the  category $\Foam$ is the original category of unoriented $SL(3)$ foams in \cite{KR1}.

Functors $\kup{\bullet}^c$, $\kup{\bullet}^c_E$ and $\kup{\bullet}_0^c$ are defined via the  universal construction, see \cite{BHMV, SL3}, just as in \cite{KR1}. Let $S$ be a graded unital $\kk$-algebra and $\phi\co R \to S$ a homomorphism of graded unital commutative $\kk$-algebras. The homomorphisms considered here are
\begin{itemize}
\item $\mathrm{id}_R: R\lra R$, 
\item $\phi_E\co R \to \kk[E]$ given by $\phi_E(E_i) = \delta_{i3}E$, 
\item $\phi_0\co R \to \kk$ given by $\phi_0(E_i) =0$.
\end{itemize}

For a web $\Gamma$, define $W_S(\Gamma)$ to be the free graded $S$-module generated by $\Hom_{\Foam^c}(\emptyset_1, \Gamma)$.  Consider the symmetric bilinear form $(,)_\phi$ given on generators of $W_S(\Gamma)$ by:
\[
(F,G)_\phi :=  \phi(\kup{\overline{G}\circ F}) \in S, 
\]
where $\overline{G} \in \Hom(\Gamma, \emptyset_1)$ is the mirror image of $G$ with respect to the plane $\RR^2\times \{\frac12\}$ (respectively, $\SS^2\times \{\frac12\}$). Define 
\begin{equation}
    \kup{\Gamma}^c_\phi \ := \ W_S(\Gamma)/\mathrm{ker}((,)_{\phi})
\end{equation}
as the quotient of $W_S(\Gamma)$ by the kernel of $(,)_\phi$. These quotients extend to a functor
\begin{equation}
\kup{\bullet}^c_\phi\co \Foam^c \to S\textrm{-}\md.
\end{equation} 
Since the homomorphism assigned to a conical foam $F$ has degree $d(F)$, one can use the category of graded modules and homogeneous module homomorphisms, denoted $S\textrm{-}\md$ here, as the target category. This category is not pre-additive, since the sum of two homogeneous  homomorphisms of different degrees is not homogeneous. 

Functors $\kup{\bullet}^c$, $\kup{\bullet}^c_E$ and $\kup{\bullet}_0^c$ correspond to the choices $\phi = \mathrm{id}_R$, $\phi= \phi_E$ and $\phi=\phi_0$ respectively, see above.
Functors $\kup{\bullet}$, $\kup{\bullet}_E$ and $\kup{\bullet}_0$ and, more generally, $\kup{\bullet}_\phi$ are defined in \cite{KR1} in exactly the same manner but using the  category $\Foam$ instead of $\Foam^c$.

A closed conical foam $F$ is an endomorphism of the empty web $\emptyset_1$. $\kup{F}^c_\phi$ is then an endomorphism of $\kup{\emptyset_1}_\phi^c \cong S$, hence an element of $S$, and 
\[
\kup{F}^c_\phi = \phi\left(\kup{F}\right). 
\]


\begin{prop}\label{prop:subquotient-2}
For every web $\Gamma$ and every graded ring homomorphism $\phi\co R \to S$, the graded $S$-module $\kup{\Gamma}_\phi$ is a subquotient of $\kup{\Gamma}_\phi^c$.
\end{prop}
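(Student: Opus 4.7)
The plan is to compare the universal construction carried out in the smaller category $\Foam$ with the one carried out in $\Foam^c$, and exhibit $\kup{\Gamma}_\phi$ as a quotient of a submodule of $\kup{\Gamma}_\phi^c$.

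First I would set up the two ambient free modules. Write $W_S(\Gamma)$ for the free graded $S$-module on $\Hom_{\Foam}(\emptyset_1,\Gamma)$ and $W_S^c(\Gamma)$ for the free graded $S$-module on $\Hom_{\Foam^c}(\emptyset_1,\Gamma)$. Since $\Foam$ is a (graded) subcategory of $\Foam^c$, the inclusion of Hom-sets extends to an inclusion of graded $S$-modules $\iota\co W_S(\Gamma)\hookrightarrow W_S^c(\Gamma)$.

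Next I would check that the bilinear forms are compatible with $\iota$. For $F,G\in \Hom_{\Foam}(\emptyset_1,\Gamma)$ the closed foam $\overline{G}\circ F$ is again a regular foam, so the foam evaluation used to define $(F,G)_\phi$ (via the $\Foam$-universal construction) coincides with the conical evaluation applied to the same closed foam, and hence with $(\iota(F),\iota(G))_\phi$ computed in $W_S^c(\Gamma)$. Thus $\iota$ intertwines $(\,,\,)_\phi$ with the restriction of the conical bilinear form $(\,,\,)_\phi^c$.

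Now I would compare kernels. By definition, $x\in W_S(\Gamma)$ lies in $\ker(\,,\,)_\phi$ iff $(x,G)_\phi=0$ for every regular $G$, whereas $\iota(x)\in \ker(\,,\,)_\phi^c$ iff $(\iota(x),G')_\phi^c=0$ for every conical $G'$. Since $\Foam\subset \Foam^c$ there are more testing morphisms available on the conical side, so
\begin{equation*}
    \iota^{-1}\bigl(\ker(\,,\,)_\phi^c\bigr) \subseteq \ker(\,,\,)_\phi.
\end{equation*}
Let $W\subseteq \kup{\Gamma}_\phi^c$ denote the image of $W_S(\Gamma)$ in the quotient $W_S^c(\Gamma)/\ker(\,,\,)_\phi^c$. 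Then $W$ is a graded $S$-submodule of $\kup{\Gamma}_\phi^c$, naturally isomorphic to $W_S(\Gamma)/\iota^{-1}(\ker(\,,\,)_\phi^c)$, and the inclusion of kernels above gives a well-defined surjective homogeneous $S$-module map
\begin{equation*}
    W \;\twoheadrightarrow\; W_S(\Gamma)/\ker(\,,\,)_\phi \;=\; \kup{\Gamma}_\phi.
\end{equation*}
This exhibits $\kup{\Gamma}_\phi$ as a quotient of the submodule $W$ of $\kup{\Gamma}_\phi^c$, i.e.\ as a subquotient, completing the proof.

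There is no real obstacle here; the only point requiring care is that the closed-foam evaluations agree under $\iota$, which reduces to observing that gluing two regular foams along $\Gamma$ produces a regular (non-conical) closed foam, so the $\Foam$-bilinear form is literally the restriction of the $\Foam^c$-bilinear form along the canonical inclusion.
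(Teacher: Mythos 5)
Your proof is correct and follows essentially the same route as the paper: the paper invokes the general fact that for a submodule $V$ of a module $W$ with a bilinear form, $V/\ker(\,,\,)_{|V}$ is a subquotient of $W/\ker(\,,\,)$ via the diagram $W/\ker(\,,\,) \hookleftarrow V/(\ker(\,,\,)\cap V) \twoheadrightarrow V/\ker(\,,\,)_{|V}$, applied with $V=W_S(\Gamma)$ inside the free module on conical foams. Your kernel containment $\iota^{-1}(\ker(\,,\,)_\phi^c)\subseteq \ker(\,,\,)_\phi$ and the compatibility of the two evaluations on regular foams are exactly the ingredients the paper leaves implicit.
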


\begin{proof}
This follows from the general fact that if $W$ is an $S$-module endowed with a bilinear form and $V \subseteq W$ is a submodule, then $V/\ker (,)_{V}$ is a subquotient of $W/\ker (,)$. Indeed, there is the following diagram:
\[W\left/\ker (,) \right.\hookleftarrow  V\left/\left(\ker (, ) \cap V \right) \right. \twoheadrightarrow V \left/ \ker (, )_{|V}. \right.\qedhere
\]
\end{proof}

We can consider an intermediate theory  $\kup{\bullet}^i$ that mediates between  $\kup{\bullet}$ and $\kup{\bullet}^c$. 
Given a web $\Gamma$, define $\kup{\Gamma}^i$ as the subspace of 
$\kup{\Gamma}^c$ spanned by the usual foams in $\Foam$ with boundary $\Gamma$ and not by conical foams with more general singularities.  This collection of subspaces $\kup{\Gamma}^i$, over all $\Gamma$, extends to a functor from  $\Foam$ to the category of graded modules over $R$. The space $\kup{\Gamma}$ is naturally a quotient of $\kup{\Gamma}^i$. The inclusion and the quotient together constitute a diagram 
\begin{equation}\label{eq_two_maps} 
    \kup{\Gamma}^c \hookleftarrow \kup{\Gamma}^i \twoheadrightarrow \kup{\Gamma}
\end{equation}
that commutes with maps induced by  foams. Given a  foam $F$ with $\partial_0(F)=\Gamma_0$ and $\partial_1(F)=\Gamma_1$, there is a commutative diagram 
\begin{equation}
\NB{\begin{tikzpicture}
\node (G0c) at (0,0) {$\kup{\Gamma_0}^c$}; 
\node (G0i) at (2,0) {$\kup{\Gamma_0}^i$}; 
\node (G0)  at (4,0) {$\kup{\Gamma_0}$}; 
\node (G1c) at (0,-1.5) {$\kup{\Gamma_1}^c$}; 
\node (G1i) at (2,-1.5) {$\kup{\Gamma_1}^i$}; 
\node (G1)  at (4,-1.5) {$\kup{\Gamma_1}$}; 
\draw[->] (G0) -- (G1) node [midway, right] {$\kup{F}$};
\draw[->] (G0i) -- (G1i) node [midway, right] {$\kup{F}^c$};
\draw[->] (G0c) -- (G1c)node [midway, left] {$\kup{F}^c$};
\draw[left hook->] (G0i) -- (G0c);
\draw[->>] (G0i) -- (G0);
\draw[left hook->] (G1i) -- (G1c);
\draw[->>] (G1i) -- (G1);
\end{tikzpicture}}
\end{equation}
Consequently, there are natural transformations of functors 
\begin{equation}\label{eq_two_func} 
    \kup{\bullet}^c \hookleftarrow \kup{\bullet}^i \twoheadrightarrow \kup{\bullet},
\end{equation}
where these three flavours of $\kup{\bullet}$ are viewed as functors on the category $\Foam$. Note that $\kup{\bullet}^c$ is a functor on the larger category $\Foam^c$. 

We do not know a graph where even one of the maps in (\ref{eq_two_maps}) can be shown not to be an isomorphism. It is an open problem to compute the spaces and maps in  (\ref{eq_two_maps}) for the dodecahedral graph $\Gamma_d$. 

\vspace{0.1in} 
  
Many properties of functors $\kup{\bullet}_\phi$ proved in \cite{KR1} hold (with the same proofs) for functors $\kup{\bullet}^c_\phi$, including Propositions~3.10 to 3.16 in~\cite{KR1}.

\vspace{0.1in} 

Unoriented $SL(3)$ foams are remarkable in that we do not yet know a way to determine the state space of an arbitrary unoriented $SL(3)$ web. Oriented $SL(3)$ foams and webs are much better understood, and a basis in the state space of any oriented $SL(3)$ web can be constructed recursively, see~\cite{SL3,LHRThese,MackaayVaz,MoNi} and~\cite{Kuperberg}, where $SL(3)$ webs (Kuperberg webs) were introduced.


\section{State space of the dodecahedron}
\label{sec:dodecahedron}
In this section $S$ is a graded ring and $\phi\co R \to S$ is a graded ring homomorphism. If $M$ is a $\ZZ$-graded $S$-module, $M\{q^i\}$ denotes the graded $S$-module $M$ with the grading  shifted up by $i$. In other words, $(M\{q^i\})_j = M_{j-i}$. If $P = \sum_{i \in \ZZ} a_i q^i$ is a Laurent polynomial in $q$ with nonnegative integer coefficients, $M\{P\}$ denotes the $S$-module
\[
\bigoplus_{i \in \ZZ}  M\{q^i\}^{a_i}.
\]
The state space $\kup{\emptyset_1}^c_\phi$ of the empty web $\emptyset_1$ is a graded free $S$-module with the generator in degree zero represented by the  empty foam $\emptyset_2$, 
\begin{equation} \label{eq_empty_empty} 
\kup{\emptyset_1}^c_\phi\ \cong \ S \kup{\emptyset_2}_\phi \ \cong \ S.
\end{equation} 


\subsection{Theta-web}
Before inspecting state spaces of the dodecahedron, let us inspect state spaces of the $\Theta$-web, which
are well-understood without any  need for conical foams. 

\begin{prop}\label{prop:theta-space}
Let $\Gamma$ be the $\Theta$-web. Then the following isomorphism holds:
\[
\kup{\Gamma}^c_\phi \simeq \kup{\emptyset_1}^c_\phi\{(q^{-1}+q)(q^{-2} + 1 + q^{2})\}\simeq S\{[2][3]\}.
\]
\end{prop}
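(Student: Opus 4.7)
The plan is to show matching upper and lower bounds on the graded rank of $\kup{\Theta}^c_\phi$ by comparing it with the classical state space $\kup{\Theta}_\phi$ from \cite{KR1}. For the lower bound, Proposition~\ref{prop:subquotient-2} gives that $\kup{\Theta}_\phi$ is a subquotient of $\kup{\Theta}^c_\phi$ as graded $S$-modules; since $\kup{\Theta}_\phi \cong S\{[2][3]\}$ is already established in \cite{KR1}, the graded rank of $\kup{\Theta}^c_\phi$ is at least $[2][3]$.

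For the upper bound, I would exhibit six explicit generators whose degrees sum to $[2][3] = (q+q^{-1})(q^2+1+q^{-2})$. The natural candidates are dotted theta-cups: let $\Theta_{\mathrm{cup}}\co \emptyset_1 \to \Theta$ be the foam consisting of three half-disks meeting along a common seam arc (no singular vertices, only seam points), and let $\Theta_{\mathrm{cup}}(a,b,c)$ denote this cup decorated with $a$, $b$, $c$ dots on its three facets. A suitable choice of six dot patterns yields six elements of $\kup{\Theta}^c_\phi$ of the required bidegrees. Their images are linearly independent in $\kup{\Theta}^c_\phi$ because the $6\times 6$ Gram matrix is nondegenerate: the pairing $(\Theta_{\mathrm{cup}}(a,b,c), \Theta_{\mathrm{cup}}(a',b',c'))_\phi$ equals $\phi$ applied to the evaluation of a closed theta-foam with $a+a'$, $b+b'$, $c+c'$ dots, and the latter expands as a sum of six explicit terms indexed by permutations of $\{1,2,3\}$ via the evaluation formulas (\ref{eq:Q})--(\ref{eq:P}).

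The main obstacle is the spanning statement: showing that every conical foam $F\co \emptyset_1 \to \Theta$ reduces modulo $\ker(,)_\phi$ to a linear combination of these six dotted cups. The cleanest route is to work through the intermediate space of (\ref{eq_two_maps}) and prove that both maps
\[
\kup{\Theta}^c_\phi \hookleftarrow \kup{\Theta}^i_\phi \twoheadrightarrow \kup{\Theta}_\phi
\]
are isomorphisms. The right surjection becomes an isomorphism because the classical computation of \cite{KR1} already identifies the dotted theta-cups as a basis; combined with the lower bound above, this also pins down $\kup{\Theta}^i_\phi$. For the left inclusion, one argues that any interior conical singularity in a foam bounded by $\Theta$ can be resolved: since the $\Theta$-web is topologically simple, any closed conical substructure in the interior can be separated from the boundary via neck-cutting and sphere-evaluation relations which extend routinely from \cite{KR1} to the conical setting, reducing every conical foam with $\Theta$-boundary to a standard dotted theta-cup modulo the bilinear form kernel.
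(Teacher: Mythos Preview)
Your spanning step has a genuine gap. The claim that ``any interior conical singularity in a foam bounded by $\Theta$ can be resolved'' via neck-cutting is precisely the hard point, and your justification does not go through. To separate a conical singular point $p$ from the boundary $\Theta$ you would have to cut along a sphere enclosing $p$; the foam meets that sphere in some web $\Gamma'$, and cutting there requires a decomposition of the identity foam on $\Gamma'$. There is no reason $\Gamma'$ should be a circle (for which ordinary neck-cutting applies): for the natural small sphere around $p$, the intersection is the link $\Gamma(p)$, which may be any web carrying a homogeneous Kempe class, and no identity-decomposition is available for such webs in general. Thus the reduction of conical foams with $\Theta$-boundary to ordinary foams cannot be carried out as you describe; whether the maps in (\ref{eq_two_maps}) are isomorphisms is in fact left open in the paper.

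The paper's proof sidesteps this entirely. Rather than simplifying the foam, it establishes the identity-decomposition (\ref{eq_theta_dec}) directly at the level of closed-foam evaluations: for \emph{every} conical closure $H$ of $\Theta\sqcup\overline{\Theta}$, the evaluation of $(\Theta\times[0,1])\cup H$ equals the sum of the evaluations of the six cup--cap terms glued to $H$. This is verified coloring by coloring (equations (\ref{eq:theta_eq_coloring}) and (\ref{eq:theta_diff_coloring})), using only the definition of $\kup{F,c}$ as a rational function; no topological manipulation of $H$ is needed. Together with the orthogonality (\ref{eq_orthogonality}), the six dotted cups and the six dotted caps then furnish explicit mutually inverse maps between $\kup{\Theta}^c_\phi$ and $S\{[2][3]\}$, so spanning, linear independence, and freeness all come at once. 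Your Gram-matrix computation and the subquotient bound from Proposition~\ref{prop:subquotient-2} are correct but become unnecessary once (\ref{eq_theta_dec}) is in hand.
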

The above notations use quantum $[n]=q^{n-1}+q^{n-3}+\dots + q^{1-n}.$

This proposition follows directly from \cite[Propositions 3.12 and 3.13]{KR1}, but it is instructive to give a "one-step" proof. 

\begin{proof}[Proof of Proposition~\ref{prop:theta-space}]
Let us first show that foam evaluation satisfies the following local identity: 
\begin{align} \label{eq_theta_dec}
   \NB{\tikz[scale =0.4]{\begin{scope}
  \begin{scope}[yshift= 1.5cm]
    \draw (1,0) arc (0:360:1cm and 0.3cm)coordinate [pos=0](e) coordinate [pos=0.5](f) coordinate[pos=0.2] (a)
    coordinate[pos=0.7] (b);
    \draw (b) --(a);
  \end{scope}
  \begin{scope}[yshift =-1.5cm]
    \draw (1,0) arc (0:360:1cm and 0.3cm)coordinate [pos=0](g) coordinate [pos=0.5](h) coordinate[pos=0.2] (c)
    coordinate[pos=0.7] (d);
    \draw (d) --(c);
  \end{scope}
  \draw (a) -- (c);
  \draw (b) -- (d);
  \draw (e) -- (g);
  \draw (f) -- (h); 
\end{scope}}} =
\begin{array}{c} {\scriptstyle (2,1,0)}\\
   \NB{\tikz[scale = 0.4]{\begin{scope}
  \begin{scope}[yshift= 1.5cm]
    \draw (1,0) arc (0:360:1cm and 0.3cm) coordinate[pos=0.2] (a)
    coordinate[pos=0.7] (b);
    \draw[thin] (1,0) arc (0:-180:1cm and 1.2cm) coordinate[pos=0.5] (c);
    \draw (a) .. controls +(0,0) and +(0.2,0) .. (c) .. controls
    +(-0.2, 0) and +(0,0) .. (b) --(a);
  \end{scope}
  \begin{scope}[yshift =-1.5cm]
    \draw (1,0) arc (0:360:1cm and 0.3cm) coordinate[pos=0.2] (a)
    coordinate[pos=0.7] (b);
    \draw (1,0) arc (0:180:1cm and 1.2cm) coordinate[pos=0.5] (c);
    \draw (a) .. controls +(0,0) and +(0.2,0) .. (c) .. controls
    +(-0.2, 0) and +(0,0) .. (b) --(a);
  \end{scope}
\end{scope}}} \\ {\scriptstyle (0,0,0)}
\end{array}
+
\begin{array}{c} {\scriptstyle (2,0,0)}\\
   \NB{\tikz[scale = 0.4]{}} \\ {\scriptstyle (0,0,1)} 
\end{array}
+
\begin{array}{c} {\scriptstyle (1,1,0)}\\
   \NB{\tikz[scale = 0.4]{}} \\ {\scriptstyle (0,1,0) + (0,0,1)} 
\end{array}
+
\begin{array}{c} {\scriptstyle (1,0,0)}\\
   \NB{\tikz[scale = 0.4]{}} \\ {\scriptstyle (0,1,1) + (0,0,2)} 
\end{array}
+
\begin{array}{c} {\scriptstyle (0,1,0)}\\
   \NB{\tikz[scale = 0.4]{}} \\ {\scriptstyle (0,1,1)} 
 \end{array}
 +
\begin{array}{c} {\scriptstyle (0,0,0)}\\
   \NB{\tikz[scale = 0.4]{}} \\ {\scriptstyle (0,1,2)} 
\end{array}
\end{align}
where the triples of integers on the top and bottom  encode the number of dots on each of the three facets. Note that on the right-hand side of the identity, two middle terms are each a sum of two foams.

It is convenient to introduce some notations at this point: let us fix a web $\Gamma$, $v$ a vertex of $\Gamma$ and denote $e_1$, $e_2$ and $e_3$ the three edges of $\Gamma$ adjacent to $v$. For $i=1, \dots, 6$ define $x_i$ and $y'_i$ as the foams $\Gamma \times [0,1]$ with dots on the facets $e_1 \times [0,1]$, $e_2 \times [0,1]$, and $e_3 \times [0,1]$ given by Table~\ref{tab:xyp}.
\begin{table}[]
    \centering
\begin{tabular}{|c|c|c|c|} \hline
     Number of dots on: & $e_1 \times [0,1]$ &  $e_2 \times [0,1]$ & $e_3 \times [0,1]$ \\  \hline
  \hline  
  $x_1$ &2& 1& 0 \\ \hline
  $x_2$ &2& 0& 0 \\ \hline
  $x_3$ &1& 1& 0 \\ \hline
  $x_4$ &1& 0& 0 \\ \hline
  $x_5$ &0& 1& 0 \\ \hline
  $x_6$ &0& 0& 0 \\ \hline
  $y'_1$ &0& 0& 0 \\ \hline
  $y'_2$ &0& 0& 1 \\ \hline
  $y'_3$ &0& 1& 0 \\ \hline
  $y'_4$ &0& 0& 2 \\ \hline
  $y'_5$ &0& 1& 1 \\ \hline
  $y'_6$ &0& 1& 2 \\ \hline
\end{tabular}
    \caption{Definition of $x_i$ and $y'_i$.}
    \label{tab:xyp}
\end{table}

Finally, for $i = 1, \dots, 6$ define
\begin{align}
\label{eq:yyp}
\begin{cases}
 y_i := y'_i & \text{if $i \neq 3,4$,} \\
 y_3 := y'_2 + y'_3, &  \\
 y_4 := y'_3 + y'_4. &
\end{cases}
\end{align}
 Foams $x_i$ and $y_i$ are endomorphisms of $\Gamma$ in $\Foam$ and $\Foam^c$. In particular $x_6 = y_1 = \textrm{id}_\Gamma$. 

Choosing $v$ to be the top vertex of the $\Theta$-foam and the edges $e_1$, $e_2$ and $e_3$ going from left to right, we can rewrite (\ref{eq_theta_dec}):
\begin{align} \label{eq_theta_dec2}
   \NB{\tikz[scale =0.4]{\begin{scope}
  \begin{scope}[yshift= 1.5cm]
    \draw (1,0) arc (0:360:1cm and 0.3cm)coordinate [pos=0](e) coordinate [pos=0.5](f) coordinate[pos=0.2] (a)
    coordinate[pos=0.7] (b);
    \draw (b) --(a);
  \end{scope}
  \begin{scope}[yshift =-1.5cm]
    \draw (1,0) arc (0:360:1cm and 0.3cm)coordinate [pos=0](g) coordinate [pos=0.5](h) coordinate[pos=0.2] (c)
    coordinate[pos=0.7] (d);
    \draw (d) --(c);
  \end{scope}
  \draw (a) -- (c);
  \draw (b) -- (d);
  \draw (e) -- (g);
  \draw (f) -- (h); 
\end{scope}}} =\sum_{i=1}^6 \, 
\begin{array}{c} {x_i}\\
   \NB{\tikz[scale = 0.4]{}} \\ {y_i}
\end{array}
\end{align}

\emph{Local} identity  (\ref{eq_theta_dec}) (or \ref{eq_theta_dec2})) says that no matter how the foams in the equation are completed to closed foams by an outside conical foam $H$, the corresponding linear relation on the evaluations holds.  

Denote by $F$ the closed conical foam given by gluing $H$ onto the foam on the left-hand side of (\ref{eq_theta_dec2}). Denote by $G^i$ for $i \in \{1, \dots, 6\}$  six closed conical foams given by gluing $H$ to six foams on the right-hand side of (\ref{eq_theta_dec2}), going from left to right. Two of these six foams are actually sums of conical foams. 
Denote by $G$ the conical foam which is identical to $G_i$'s but without the dots that the latter carry in the region shown in (\ref{eq_theta_dec2}):
\[
G := \begin{array}{c} {\scriptstyle (0,0,0)}\\
   \NB{\tikz[scale = 0.4]{}} \\ {\scriptstyle (0,0,0)}
 \end{array}
\]
Foam $G$ can be viewed as the composition of two \emph{$\Theta$-half-foams}. These are two foams with $\Theta$-web as the boundary and a single singular seam. They are reflections of each other about a horizontal plane. Their composition in the opposite order is the usual $\Theta$-foam.

 Conical foams $G^1, \dots, G^6$ and $G$ are identical, except for the  distribution of dots. In particular, their sets of colorings are canonically isomorphic. The set $\col{G}$ can be partitioned into two subsets: colorings which restrict to the same colorings of the two $\Theta$-webs on the top and bottom of the identity and the ones which restrict to different colorings on the $\Theta$-webs on the top and bottom of the identity. The former is canonically isomorphic to $\col{F}$, the latter is denoted $\col{G}'$, so that one may write $\col{G} \cong \col{F} \sqcup \col{G}'$.

Let us prove that 
\begin{align}\label{eq:theta_eq_coloring}
\kup{F, c} &= \sum_{i=1}^6 \kup{G^i, c} \quad \text{for all $c \in \col{F}$}
\end{align}
and
\begin{align}
0 &= \sum_{i=1}^6 \kup{G^i, c} \quad \text{for all $c \in \col{G}'.$}\label{eq:theta_diff_coloring}
\end{align}

To show (\ref{eq:theta_eq_coloring}), fix $c\in \col{F}$ and view it as a coloring of $G$ and $G_i$'s as well.  Let the top and bottom $\Theta$-webs be colored by $c$ as follows, where $(i_1,i_2,i_3)$ is a permutation of $(1,2,3)$: 
\[
\cfthetacol[0.7]{i_1}{i_2}{i_3}.
\]
One has $\chi(F_{ij}(c)) = \chi(G_{ij}(c)) -2$ for all pairs $(i, j)$. Hence 
\[
\kup{F,c} = \kup{G,c}(X_{i_1} + X_{i_2})(X_{i_1} + X_{i_3})(X_{i_2} +  X_{i_3}).
\]
Furthermore,
\begin{align*}
    \sum_{i=1}^6 \kup{G_i, c}=& \kup{G,c}\left( X_{i_1}^2 X_{i_2} + X_{i_2}^2 X_{i_3} + X_{i_1}X_{i_2}(X_{i_2}+ X_{i_3}) + X_{i_1}(X_{i_2}X_{i_3} + X_{i_3}^2) \right. \\[-7pt] & \hspace{8.5cm} \left.+ X_{i_2}^2 X_{i_3} + X_{i_2}X_{i_3}^2\right) \\
    =& \kup{G,c} \left( X_{i_1}^2 X_{i_2} + X_{i_2}^2 X_{i_3} + X_{i_1}X_{i_2}^2+ X_{i_1}X_{i_3}^2 + X_{i_2}^2 X_{i_3} + X_{i_2}X_{i_3}^2\right) \\
    =& \kup{G,c} (X_{i_1}+X_{X_{i_2}})(X_{i_1}+ X_{i_3})(X_{i_2}+ X_{i_3}) \\ 
    =& \kup{F,c},
\end{align*}
which proves (\ref{eq:theta_eq_coloring}).

To show (\ref{eq:theta_diff_coloring}), fix $c \in \col{G}'$. Denote the induced Tait-colorings of the top and bottom  $\Theta$-web boundaries of $G$ as follows:
\[
\cfthetacol[0.7]{i_1}{i_2}{i_3} \qquad \text{and} \qquad
\cfthetacol[0.7]{j_1}{j_2}{j_3}.
\]

Then, 
\begin{align*}
    \sum_{i=1}^6 \kup{G_i, c}=& \kup{G,c}\left( X_{i_1}^2 X_{i_2} + X_{i_2}^2 X_{j_3} + X_{i_1}X_{i_2}(X_{j_2}+ X_{j_3}) + X_{i_1}(X_{j_2}X_{j_3} + X_{j_3}^2)\right. \\[-7pt] & \hspace{8cm}+ \left.X_{i_2}X_{j_2}X_{i_3} + X_{j_2}X_{j_3}^2\right) \\
    =& \kup{G,c} (X_{i_1}+{X_{j_2}})(X_{i_1}+ X_{j_3})(X_{i_{2}}+ X_{j_3}) \\ 
    =& 0.
\end{align*}
For the last equality, observe that $(i_1, i_2, i_3) \neq (j_1, j_2, j_3)$ and, consequently, one of the three linear in $X$ terms on the right hand side is zero. 

We next show that the terms on the right-hand side of (\ref{eq_theta_dec}) are pairwise orthogonal idempotents, as follows. Denote by $P_i$ for $i=1, \dots, 6$ the foams with boundary on the right-hand side of (\ref{eq_theta_dec}). Any two of them, say $P_i$ and $P_j$, compose by stacking one onto the other and gluing along the $\Theta$-web,  resulting in the foam $P_iP_j$. The foam evaluation satisfies the following local relation:
\begin{equation}\label{eq_orthogonality}
P_iP_j = \delta_{ij} P_i \, . 
\end{equation}
Note that $P_i P_j$ is always a sum of foams of the form:
\[
\begin{array}{c} {\scriptstyle (a_1,a_2,a_3)}\\
   \NB{\tikz[scale = 0.4]{}} \\ {\scriptstyle (a_4,a_5,a_6)}
 \end{array} \sqcup \Theta(k, \ell, m)\qquad
 \text{with} \qquad \Theta(k, \ell, m) :=
 \begin{array}{c} {\scriptstyle (k,\ell,m)}\\
   \NB{\tikz[scale = 0.4]{\begin{scope}[scale=1]
  \begin{scope}
    \draw[densely dotted] (1,0) arc (0:-180:1cm and 0.3cm) coordinate[pos=0.6] (b); 
    \draw[dotted] (1,0) arc (0:180:1cm and 0.3cm) coordinate[pos=0.4] (a); 
    \draw[thin] (1,0) arc (0:-360:1cm and 1.2cm) coordinate[pos=0.25] (c) coordinate[pos= 0.75] (d);
    \draw (a) .. controls +(0,-.3) and +(0.2,0) .. (c) .. controls
    +(-0.2, 0) and +(0,-.3) .. (b);
    \draw (a) .. controls +(0,.3) and +(0.2,0) .. (d) .. controls    +(-0.2, 0) and +(0,.3) .. (b);
    \end{scope}
\end{scope}}}\\ {\scriptstyle }
   \end{array},
\]
where $k, \ell$ and $m$ denote the number of dots on the left, right and middle facets of the $\Theta$-foam $\Theta(k, \ell, m)$.
Hence, (\ref{eq_orthogonality}) is a direct consequence of the multiplicativity of foam evaluation with respect to disjoint union and formulas for the evaluation of closed $\Theta$-foams. One can easily check each of the 36 cases using the following evaluation rules
\[
\kup{\Theta(k,\ell,m)}= 
\begin{cases}
1 & \text{if $\{k,\ell,m\} = \{0,1,2\}$,} \\
0 & \text{if $\#\{k,\ell,m\} \leq 2$},
\end{cases}
\]
$R$-linearity of closed foam evaluation, and various symmetries, to shorten the computation.

From orthogonality relations (\ref{eq_orthogonality}) and taking degrees into account, one deduces that the matrices
\begin{align*}
&\kup{\begin{pmatrix}
\begin{array}{c}
{\scriptstyle (2,1,0)}\\
   \NB{\tikz[scale = 0.4]{\begin{scope}
   \draw (1,0) arc (0:360:1cm and 0.3cm) coordinate[pos=0.2] (a)
    coordinate[pos=0.7] (b);
   \draw[thin] (1,0) arc (0:-180:1cm and 1.2cm) coordinate[pos=0.5] (c);
   \draw (a) .. controls +(0,0) and +(0.2,0) .. (c) .. controls
    +(-0.2, 0) and +(0,0) .. (b) --(a);
\end{scope}
}} 
\end{array}
&
\begin{array}{c} {\scriptstyle (2,0,0)}\\
   \NB{\tikz[scale = 0.4]{}}  
\end{array}
&
\begin{array}{c} {\scriptstyle (1,1,0)}\\
   \NB{\tikz[scale = 0.4]{}}  
\end{array}
&
\begin{array}{c} {\scriptstyle (1,0,0)}\\
   \NB{\tikz[scale = 0.4]{}} 
\end{array}
&
\begin{array}{c} {\scriptstyle (0,1,0)}\\
   \NB{\tikz[scale = 0.4]{}}  
 \end{array}
 &
\begin{array}{c} {\scriptstyle (0,0,0)}\\
   \NB{\tikz[scale = 0.4]{}}  
\end{array}
\end{pmatrix}}_\phi^c \qquad \text{and} \\
&
\kup{
\begin{pmatrix}
\begin{array}{c} \NB{\tikz[scale = 0.4]{  \begin{scope}
    \draw (1,0) arc (0:360:1cm and 0.3cm) coordinate[pos=0.2] (a)
    coordinate[pos=0.7] (b);
    \draw (1,0) arc (0:180:1cm and 1.2cm) coordinate[pos=0.5] (c);
    \draw (a) .. controls +(0,0) and +(0.2,0) .. (c) .. controls
    +(-0.2, 0) and +(0,0) .. (b) --(a);
  \end{scope}
}} \\ {\scriptstyle (0,0,0)}
\end{array}
&
\begin{array}{c} 
\NB{\tikz[scale = 0.4]{}} \\ {\scriptstyle (0,0,1)} 
\end{array}
&
\begin{array}{c}
   \NB{\tikz[scale = 0.4]{}} \\ {\scriptstyle (0,1,0) + (0,0,1)} 
\end{array}
&
\begin{array}{c}
   \NB{\tikz[scale = 0.4]{}} \\ {\scriptstyle (0,1,1) + (0,0,2)} 
\end{array}
&
\begin{array}{c}
   \NB{\tikz[scale = 0.4]{}} \\ {\scriptstyle (0,1,1)}
\end{array}
 &
\begin{array}{c}
   \NB{\tikz[scale = 0.4]{}} \\ {\scriptstyle (0,1,2)}
\end{array}
\end{pmatrix}^t}^c_\phi
\end{align*}
give mutually inverse isomorphisms between the state space $\kup{\Gamma}^c_\phi$ and 
\[\kup{\emptyset_1}_\phi^c\{(q^{-1} + q)(q^{-2} + 1 + q^2)\}\cong S\{ [2][3]\},
\]   
see also (\ref{eq_empty_empty}). 
In the definition of the matrices, the functor $\kup{F}_\phi^c$ is meant to be applied to each entry. Using $x_i$'s and $y_i$'s these matrices can be rewritten:
\begin{align}
\begin{pmatrix}
\kup{
\begin{array}{c} {x_i}\\
   \NB{\tikz[scale = 0.4]{}}  
\end{array}
}_{\phi}^c
\end{pmatrix}_{1\leq i \leq 6}
\quad \text{and} \quad
\begin{pmatrix}
\kup{
\begin{array}{c} 
   \NB{\tikz[scale = 0.4]{}}  \\
   { y_i}
\end{array}
}_{\phi}^c
\end{pmatrix}_{1\leq i \leq 6}
\end{align}
seen as line and column matrices respectively.
\end{proof}

From identity (\ref{eq_theta_dec}), one obtains that the state space of the $\Theta$-web $\Gamma$ is a free  $S$-module with a basis of foams
\[
\begin{array}{c}
{\scriptstyle (2,1,0)}\\
   \NB{\tikz[scale = 0.4]{}} 
\end{array}
,\quad
\begin{array}{c} {\scriptstyle (2,0,0)}\\
   \NB{\tikz[scale = 0.4]{}}  
\end{array}
,\quad
\begin{array}{c} {\scriptstyle (1,1,0)}\\
   \NB{\tikz[scale = 0.4]{}}  
\end{array}
,\quad
\begin{array}{c} {\scriptstyle (1,0,0)}\\
   \NB{\tikz[scale = 0.4]{}} 
\end{array}
,\quad
\begin{array}{c} {\scriptstyle (0,1,0)}\\
   \NB{\tikz[scale = 0.4]{}}  
 \end{array}
\quad \text{and} \quad
\begin{array}{c} {\scriptstyle (0,0,0)}\\
   \NB{\tikz[scale = 0.4]{}}  
\end{array}
.
\]
\vspace{0.1in}

Due to the $\Theta$-web $\Gamma$ admitting a symmetry axis which does not contain vertices of $\Gamma$, the state space $\kup{\Gamma}$ is a Frobenius algebra over $S$ rather than just an $S$-module. The multiplicative structure can be seen as coming from the foam depicted in Figure~\ref{fig:theta-pants}.

\begin{figure}[]ht
    \centering
    \NB{\tikz[]{\input{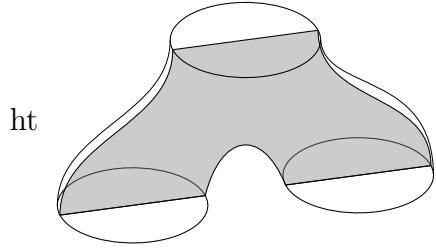}}}
    \caption{This pair of seamed pants gives the multiplicative structure on the state space of the $\Theta$-web.}
    \label{fig:theta-pants}
\end{figure}

The state space is a quotient of $S[Y_1, Y_2, Y_3]$, where 
\[
Y_1=
\begin{array}{c}
{\scriptstyle (1,0,0)}\\
   \NB{\tikz[scale = 0.4]{}} 
\end{array}
,\quad Y_2= 
\begin{array}{c}
{\scriptstyle (0,1,0)}\\
   \NB{\tikz[scale = 0.4]{}} 
\end{array}
\quad \text{and}\quad Y_3 =
\begin{array}{c}
{\scriptstyle (0,0,1)}\\
   \NB{\tikz[scale = 0.4]{}} 
\end{array}.
\]

Let us consider the case where $S=R= \kk[E_1, E_2, E_3]$ and $\phi=\mathrm{id}_R$. One can show that 
\[\kup{\Gamma}^c \simeq R[Y_1, Y_2, Y_3]\left/ \left(
\begin{array}{c} Y_1 + Y_2 + Y_3- E_1, \\ Y_1Y_2 + Y_2Y_3 + Y_1Y_3 - E_2, \\ Y_1Y_2Y_3 - E_3 \end{array}
\right) 
\right. \simeq \kk[Y_1, Y_2, Y_3] \simeq R'.\]
The state space $\kup{\Gamma}^c$ is a free $R$-module of rank six and can be identified with the $GL(3)$-equivariant cohomology of the full flag variety of $\CC^3$ with coefficients in $\kk$.  The ground ring $R$ is isomorphic to the $GL(3)$-equivariant cohomology of a  point with the same coefficient field.

\subsection{Tait colorings of the dodecahedron}

The dodecahedron graph $\Gd$ has 60 Tait colorings, and they correspond to 30 possible Hamiltonian cycles on $\Gd$. Each Hamiltonian cycle gives rise to six Tait colorings, and a coloring corresponds to a triple of Hamiltonian cycles, for colors $\{1,2\}$, $\{1,3\}$, $\{2,3\}$, correspondingly. Kempe moves permute these six colorings. For $\Gd$, Kempe equivalence classes coincide with the equivalence classes for the permutation action of $S_3$ on Tait colorings. 
For more details about such webs, see Section~\ref{sec:kempe}.

The group $A_5$ of rotations of the dodecahedron has order 60 and acts with two orbits on the set of Hamiltonian cycles. The two orbits consists of 15 'clockwise' and 15 'counterclockwise' Hamiltonian cycles. The group $A_5\times \ZZ_2$ of the isometries of the dodecahedron acts transitively on the set of Hamiltonian cycles.   
\begin{figure}[ht]
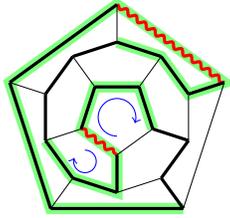

    \centering
    \NB{\tikz[]{\begin{scope}[scale={0.5},decoration={markings, mark=at
  position 0.5 with {\arrow{>}}},postaction={decorate}]
\begin{scope}[line width=1.5mm, green, opacity= 0.5]
\draw (198:2) -- (234:2)-- (270:2) --  (270:1) -- (198:1) -- (126:1) -- (54:1) -- (-18:1);
 \draw(90:2) -- (54:2) -- (18:2) -- (18:3) --( 90:3) --(162:3) -- (234:3) -- (306:3);
\end{scope}

\begin{scope}
\draw[->, blue] (270:0.5) arc (270:-18:0.5);
\draw[shift = (234:1.45cm), ->, blue] (90:0.3) arc (90: -198:0.3);
\end{scope}
  \begin{scope}[very thick]
    \draw[red, decoration={snake, segment length=1.5mm, amplitude=0.3mm},decorate] (18:3)  -- (90:3);
    \draw (162:3) -- ( 234:3);
    \draw (306:3) -- (306:2);
    \draw (54:1)  -- (126:1);
    \draw[red, decoration={snake, segment length=1.5mm, amplitude=0.3mm}, decorate] (198:1) -- (270:1);
    \draw (-18:1) -- (-18:2);
    \draw (18:2)  -- (54:2);
    \draw (90:2)  -- (126:2);
    \draw (162:2) -- (198:2);
    \draw (234:2) -- (270:2);
  \end{scope}
  \begin{scope}[very thick]
    \draw (306:2) -- (-18:2);
    \draw (198:2) -- (234:2);
    \draw (126:2) -- (162:2);
    \draw (54:2)  -- (90:2);
    \draw (270:1) -- (270:2);
    \draw (126:1) -- (198:1);
    \draw (-18:1) -- (54:1);
    \draw (18:3)  -- (18:2);
    \draw (234:3) -- (306:3);
    \draw (90:3)  -- (162:3);
  \end{scope}
  \begin{scope}[very thin]
    \draw (270:2) -- (306:2);
    \draw (-18:2) -- (18:2);
    \draw (198:1) -- ( 198:2);
    \draw (126:1) -- (126:2);
    \draw (54:1)  -- (54:2);
    \draw (270:1) -- (-18:1);
    \draw (234:3) -- (234:2);
    \draw (162:3) -- (162:2);
    \draw (90:3)  -- (90:2);
    \draw (306:3) -- (18:3);
  \end{scope}
\end{scope}}}
    \caption{A clockwise Hamiltonian cycle in $\Gd$.}
  \label{fig:hamitlonian-dodeca}
\end{figure}
See Figure~\ref{fig:hamitlonian-dodeca} above for one such cycle. It's determined by a pair of opposite edges in the dodecahedron graph (there are 15 such pairs, since there are 30 edges), and the direction of the 'twist' around the edge in a pair, either clockwise or counterclockwise, as the Hamiltonian cycle exits that edge. In Figure~\ref{fig:hamitlonian-dodeca} the cycle goes clockwise. 

The stabilizer subgroup in $A_5\times \ZZ_2$ for this cycle is $\ZZ_2\times \ZZ_2$, generated by the reflection in the plane through the two edges and by the reflection in the plane through the centers of the two edges and orthogonal to them. 

Group $A_5\times \ZZ_2$ has order $120$ and acts transitively on the set of Hamiltonian cycles, with stabilizers isomorphic to $\ZZ_2\times \ZZ_2$. Thus there are 30 Hamiltonian cycles, constituting one orbit of the $A_5\times \ZZ_2$ action.

\subsection{State space}
\label{sec:state-space-dodec-subsection}
\begin{prop}
  The following isomorphism of graded $R$-modules holds:
  \[ \kup{\Gd}^c \cong 10(q+ q^{-1})(q^2 + 1 + q^{-2})
  \kup{\emptyset_1}^c \cong R\{10[2][3]\}.
  \] 
  In particular $ \kup{\Gd}^c$ a  free $R$-module of graded rank $10[2][3]$.
\end{prop}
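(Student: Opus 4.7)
The strategy is to adapt the proof of Proposition~\ref{prop:theta-space} to the dodecahedron by exploiting the fact that $\Gd$ admits 10 Kempe classes, each homogeneous of degree $3$, and in each class every pair of colors forms a single Hamiltonian cycle. The plan is to construct 60 orthogonal idempotents in $\End_{\Foam^c}(\Gd)$ summing to the identity morphism, thereby decomposing $\kup{\Gd}^c$ into 60 rank-1 free $R$-summands, one for each Kempe class and each dot configuration of the $\Theta$-web basis.

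Fix a vertex $v$ of $\Gd$ with three adjacent edges $e_1, e_2, e_3$. For each Kempe class $\kappa_i$ ($i = 1, \ldots, 10$) and each of the 6 dot configurations $x_k$ (resp.\ $y_k$) of Table~\ref{tab:xyp}, let $U_{i,k} \colon \emptyset_1 \to \Gd$ be the cup foam $\cone(\Gd)$ with its single apex adorned by $\kappa_i$ and dots placed on the facets $\cone(e_1), \cone(e_2), \cone(e_3)$ according to $x_k$, and let $V_{i,k}$ be the mirror cap decorated by $y_k$. Define $P_{i,k} := U_{i,k} \circ V_{i,k} \in \End_{\Foam^c}(\Gd)$. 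I would then establish orthogonality $P_{i,k} P_{j,l} = \delta_{ij}\delta_{kl} P_{i,k}$ as follows: the closed composite $V_{j,l} \circ U_{i,k}$ is a sphere with two adorned apexes, and its admissible colorings restrict to Tait colorings of the equatorial $\Gd$ forced to lie simultaneously in $\kappa_i$ and $\kappa_j$, so by Kempe-class disjointness the evaluation vanishes whenever $i \neq j$. For $i = j$, every coloring $c \in \kappa_i$ satisfies $d_{pq}(\Gd, c) = 1$ for all color pairs, so each bicolored surface $F_{pq}(c)$ is a single $2$-sphere with $\chi = 2$, and the factor $Q(F,c) = (X_1+X_2)(X_1+X_3)(X_2+X_3)$ together with the closed $\Theta$-foam evaluation rules $\kup{\Theta(k,\ell,m)} = 1$ iff $\{k,\ell,m\} = \{0,1,2\}$ reproduces verbatim the orthogonality computation of the $\Theta$-web proof.

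To conclude one must prove the decomposition identity $\text{id}_{\Gd} = \sum_{i,k} P_{i,k}$ in $\End_{\Foam^c}(\Gd)$, which is verified coloring-by-coloring after capping both sides by an arbitrary test foam $H \colon \Gd \to \Gd$ and comparing evaluations of the resulting closed conical foams. Each admissible coloring $c$ of the LHS closed foam restricts to a Tait coloring of the equatorial $\Gd$ lying in a unique Kempe class $\kappa_{i_0}$; inserting the local $\Theta$-identity~(\ref{eq_theta_dec2}) in a small ball near $v$ converts $\kup{F,c}^c$ into $\sum_{k=1}^{6} \kup{G^{i_0,k},c}^c$, while RHS contributions from $i \neq i_0$ vanish by Kempe disjointness and those from colorings with mismatched top/bottom $\Theta$-sector colorings cancel exactly as in~(\ref{eq:theta_diff_coloring}). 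The main obstacle is the clean organization of this coloring analysis: one must verify that the local $\Theta$-decomposition at $v$ interacts correctly with the Kempe-class adornments at the two apexes and that every admissible coloring of a composite closed conical foam factors cleanly through a unique Kempe class, so no cross-terms survive. Once this is in place, the mutually inverse matrix argument from the end of the proof of Proposition~\ref{prop:theta-space} gives $\kup{\Gd}^c \cong R\{10[2][3]\}$ as a free graded $R$-module of rank $60$ with basis the cup foams $U_{i,k}$.
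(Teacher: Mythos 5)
Your proposal is correct and follows essentially the same route as the paper: the identity foam on $\Gd$ is decomposed into $60$ pairwise orthogonal idempotents indexed by the $10$ Kempe classes and the $6$ dot configurations of the $\Theta$-web basis, built from adorned cone cup/cap foams over $\Gd$, with the identity verified coloring-by-coloring against an arbitrary closing foam and the orthogonality reduced to the closed $\Theta$-foam evaluations. The only (cosmetic) difference is that you place the dots directly on the cone facets rather than composing with the dotted cylinders $x_i$, $y_i$, which gives the same morphisms since dots float freely on facets.
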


This decomposition holds for any homomorphism  $\phi: R\lra S$ of graded commutative rings as well, 
\[ \kup{\Gd}^c_{\phi}  \cong S\{10[2][3]\}.
  \] 

\begin{proof}

The proof uses a decomposition of the identity foam on $\Gd$ into a sum of 60 pairwise orthogonal idempotents. 

Recall that in the unoriented $SL(3)$ theory the state space  of the $\Theta$-web is a free rank six module over $R$, the state space of the empty graph. This decomposition can be written via a relation similar to the neck-cutting relation for 
the identity foam on the circle graph. The latter has three terms on the right hand side \cite{KR1} and mimics the formula 
\begin{equation}\label{eq_neck_cutting} 
 a = \sum_{i=1}^n x_i \otimes \epsilon_A(y_i a) 
\end{equation}
where $A$ is a commutative Frobenius algebra over a ground ring $R$ with dual bases $\{x_i\}_{i=1}^n$ and $\{y_i\}_{i=1}^n$ relative to the trace map $\epsilon_A : A \lra R$. 

Such a decomposition in the unoriented $SL(3)$ theory~\cite{KR1} for the state space $\kup{\Gamma}$ of a planar web $\Gamma$ always exists if $\kup{\Gamma}$ is a free graded module over $R$ (necessarily of finite rank), with the number of terms equal to the rank of the module.

Recall that $\Gd$ is homogeneous of Kempe-degree $3$ and has $10$
different Kempe equivalence classes. We denote these classes by $\kappa_j$, where $1\leq j \leq 10$, in some order.

Denote by $C_+ \subset \RR^3_+$ the conical foam which is the cone over $\Gd$. This
is a conical foam with a single singular point $p_+$. The bottom boundary of $C_+$ is $\Gd$ and the top boundary is empty. Denote
by $C_-$ the mirror image of $C_+$ with respect to reflection about
the horizontal plane and $p_-$ the singular point of $C_-$. 

Both $C_+$
and $C_-$ are unadorned conical foams with boundary with a single singular point each. We think
of $C_+$ as a singular cup and $C_-$ as a singular cap. Composing
these unadorned conical foams along the common boundary $\Gd$ results in a closed unardorned 
conical foam $C_+ C_-$ with two singular points (see Figure~\ref{fig:dodecahedron-capcup}). 

\begin{figure}[ht]
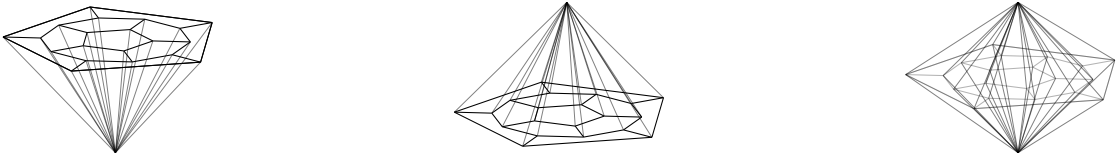

  \centering
  \NB{\tikz[]{\begin{scope}[yshift = 0cm, yscale = 0.15, xscale=0.5, rotate=13,decoration={markings, mark=at
  position 0.5 with {\arrow{>}}},postaction={decorate}]
\draw ( 18:3) -- ( 18:2);
\draw ( 90:3) -- ( 90:2);
\draw (162:3) -- (162:2);
\draw (234:3) -- (234:2);
\draw (306:3) -- (306:2);
\draw ( 18:3) -- ( 90:3);
\draw ( 90:3) -- (162:3);
\draw (162:3) -- (234:3);
\draw (234:3) -- (306:3);
\draw (306:3) -- ( 18:3);
\draw (-18:1) -- (-18:2);
\draw ( 54:1) -- ( 54:2);
\draw (126:1) -- (126:2);
\draw (198:1) -- (198:2);
\draw (270:1) -- (270:2);
\draw ( 18:3) -- ( 90:3) coordinate (a0);
\draw ( 90:3) -- (162:3) coordinate (a1);
\draw (162:3) -- (234:3) coordinate (a2);
\draw (234:3) -- (306:3) coordinate (a3);
\draw (306:3) -- ( 18:3) coordinate (a4);
\draw (-18:1) -- ( 54:1) coordinate (a5);
\draw ( 54:1) -- (126:1) coordinate (a6);
\draw (126:1) -- (198:1) coordinate (a7);
\draw (198:1) -- (270:1) coordinate (a8);
\draw (270:1) -- (-18:1) coordinate (a9);
\draw (-18:2) -- ( 18:2) coordinate (a10);
\draw ( 18:2) -- ( 54:2) coordinate (a11);
\draw ( 54:2) -- ( 90:2) coordinate (a12);
\draw ( 90:2) -- (126:2) coordinate (a13);
\draw (126:2) -- (162:2) coordinate (a14);
\draw (162:2) -- (198:2) coordinate (a15);
\draw (198:2) -- (234:2) coordinate (a16);
\draw (234:2) -- (270:2) coordinate (a17);
\draw (270:2) -- (306:2) coordinate (a18);
\draw (306:2) -- (-18:2) coordinate (a19);
\end{scope}
 \coordinate (t) at (0, -1.5);
 \coordinate (b) at (6, 0.5);
\begin{scope}[xshift=6cm, yshift =-1cm, yscale = 0.15, xscale=0.5, rotate=13,decoration={markings, mark=at
  position 0.5 with {\arrow{>}}},postaction={decorate}]
\draw ( 18:3) -- ( 18:2);
\draw ( 90:3) -- ( 90:2);
\draw (162:3) -- (162:2);
\draw (234:3) -- (234:2);
\draw (306:3) -- (306:2);
\draw (-18:1) -- (-18:2);
\draw ( 54:1) -- ( 54:2);
\draw (126:1) -- (126:2);
\draw (198:1) -- (198:2);
\draw (270:1) -- (270:2);
\draw ( 18:3) -- ( 90:3) coordinate (b0);
\draw ( 90:3) -- (162:3) coordinate (b1);
\draw (162:3) -- (234:3) coordinate (b2);
\draw (234:3) -- (306:3) coordinate (b3);
\draw (306:3) -- ( 18:3) coordinate (b4);
\draw (-18:1) -- ( 54:1) coordinate (b5);
\draw ( 54:1) -- (126:1) coordinate (b6);
\draw (126:1) -- (198:1) coordinate (b7);
\draw (198:1) -- (270:1) coordinate (b8);
\draw (270:1) -- (-18:1) coordinate (b9);
\draw (-18:2) -- ( 18:2) coordinate (b10);
\draw ( 18:2) -- ( 54:2) coordinate (b11);
\draw ( 54:2) -- ( 90:2) coordinate (b12);
\draw ( 90:2) -- (126:2) coordinate (b13);
\draw (126:2) -- (162:2) coordinate (b14);
\draw (162:2) -- (198:2) coordinate (b15);
\draw (198:2) -- (234:2) coordinate (b16);
\draw (234:2) -- (270:2) coordinate (b17);
\draw (270:2) -- (306:2) coordinate (b18);
\draw (306:2) -- (-18:2) coordinate (b19);
\end{scope}

\begin{scope}[very thin, opacity= 0.5]
  \foreach \i in {0,1,..., 19} {
  \draw (a\i) -- (t);
  \draw (b\i) -- (b);
  }
\end{scope}

\begin{scope}[xshift= 12cm, yshift = -0.5cm, yscale = 0.15, xscale=0.5, rotate=13,decoration={markings, mark=at
  position 0.5 with {\arrow{>}}},postaction={decorate},
  opacity = 0.3]
\draw ( 18:3) -- ( 18:2);
\draw ( 90:3) -- ( 90:2);
\draw (162:3) -- (162:2);
\draw (234:3) -- (234:2);
\draw (306:3) -- (306:2);
\draw ( 18:3) -- ( 90:3);
\draw ( 90:3) -- (162:3);
\draw (162:3) -- (234:3);
\draw (234:3) -- (306:3);
\draw (306:3) -- ( 18:3);
\draw (-18:1) -- (-18:2);
\draw ( 54:1) -- ( 54:2);
\draw (126:1) -- (126:2);
\draw (198:1) -- (198:2);
\draw (270:1) -- (270:2);
\draw ( 18:3) -- ( 90:3) coordinate (a0);
\draw ( 90:3) -- (162:3) coordinate (a1);
\draw (162:3) -- (234:3) coordinate (a2);
\draw (234:3) -- (306:3) coordinate (a3);
\draw (306:3) -- ( 18:3) coordinate (a4);
\draw (-18:1) -- ( 54:1) coordinate (a5);
\draw ( 54:1) -- (126:1) coordinate (a6);
\draw (126:1) -- (198:1) coordinate (a7);
\draw (198:1) -- (270:1) coordinate (a8);
\draw (270:1) -- (-18:1) coordinate (a9);
\draw (-18:2) -- ( 18:2) coordinate (a10);
\draw ( 18:2) -- ( 54:2) coordinate (a11);
\draw ( 54:2) -- ( 90:2) coordinate (a12);
\draw ( 90:2) -- (126:2) coordinate (a13);
\draw (126:2) -- (162:2) coordinate (a14);
\draw (162:2) -- (198:2) coordinate (a15);
\draw (198:2) -- (234:2) coordinate (a16);
\draw (234:2) -- (270:2) coordinate (a17);
\draw (270:2) -- (306:2) coordinate (a18);
\draw (306:2) -- (-18:2) coordinate (a19);
\end{scope}

 \coordinate (t) at (12, 0.5);
 \coordinate (b) at (12, -1.5);

\begin{scope}[very thin, opacity= 0.5]
  \foreach \i in {0,1,..., 19} {
  \draw (a\i) -- (t);
  \draw (a\i) -- (b);
  }
\end{scope}

  \caption{From left to right, conical foams $C_+$, $C_-$ and $C_+C_-$.}
  \label{fig:dodecahedron-capcup}
\end{figure}
  
For each Kempe class $\kappa$ of $\Gd$, denote by $C_+(\kappa)$
(respectively $C_-(\kappa)$) the homogeneous adorned conical foam with boundary $C_+$ (respectively $C_-$),
where $p_+$ (respectively $p_-$) is mapped to $\kappa$.

The closed homogeneous adorned foam $C_+ (\kappa) C_- (\kappa')$ is given by composition of these two conical foams along the common boundary  $G_d$. It admits a Tait coloring if and only if $\kappa = \kappa'$. When this is the case, there are exactly six Tait colorings  of $C_+ (\kappa) C_- (\kappa)$. Furthermore, together they  constitute a single $S_3$ orbit on the set of Tait colorings of this homogeneous adorned conical foam.  

The unadorned conical foam with boundary $C_- C_+$ is the disjoint union of $C_-$ and $C_+$,  and it induces an endomorphism of $\Gd$. It has two singular points $p_-$ and $p_+$.   
For $j=1, \dots, 10$, denote by $P_j$  adorned conical foam with boundary $C_-(\kappa_j) C_+(\kappa_j)$. This is obtained by mapping both $p_-$ and $p_+$ to $\kappa_j$. Conical foam $P_j$ yields an endomorphism of the state space $\kup{\Gd}$.

Choose a vertex $v$ of $\Gd$ and denote its adjacent edges by 
$e_1, e_2$ and $e_3$. We use foams $x_i$'s and $y_i$'s ($1\leq i \leq 6$) introduced in the proof of Proposistion~\ref{prop:theta-space}. In our context they are endomorphisms of web $\Gd$ which differ from $\Gd \times [0,1]$ only by their dot distributions which are given by (\ref{eq:yyp}) and Table~\ref{tab:xyp}.


We claim that foam evaluation satisfies the following local relation:
\begin{align} \label{eq:dodec-id-split-1}
\Gd \times [0,1] &= \sum_{j=1}^{10} \sum_{i=1}^6 x_iP_j y_i, 
\end{align}
 and furthermore, the $6\times 10=60$ terms on the right hand side of the equation, when seen as endomorphisms of $\kup{\Gd}$, are pairwise
 orthogonal idempotents.

Diagramatically, this reads:
  \begin{align*}
   \NB{\tikz[scale =0.5]{\begin{scope}[yshift = 2cm, yscale = 0.15, xscale=0.5, rotate=13,decoration={markings, mark=at
  position 0.5 with {\arrow{>}}},postaction={decorate}]
\draw ( 18:3) -- ( 18:2);
\draw ( 90:3) -- ( 90:2);
\draw (162:3) -- (162:2);
\draw (234:3) -- (234:2);
\draw (306:3) -- (306:2);
\draw ( 18:3) -- ( 90:3);
\draw ( 90:3) -- (162:3);
\draw (162:3) -- (234:3);
\draw (234:3) -- (306:3);
\draw (306:3) -- ( 18:3);
\draw (-18:1) -- (-18:2);
\draw ( 54:1) -- ( 54:2);
\draw (126:1) -- (126:2);
\draw (198:1) -- (198:2);
\draw (270:1) -- (270:2);
\draw ( 18:3) -- ( 90:3) coordinate (a0);
\draw ( 90:3) -- (162:3) coordinate (a1);
\draw (162:3) -- (234:3) coordinate (a2);
\draw (234:3) -- (306:3) coordinate (a3);
\draw (306:3) -- ( 18:3) coordinate (a4);
\draw (-18:1) -- ( 54:1) coordinate (a5);
\draw ( 54:1) -- (126:1) coordinate (a6);
\draw (126:1) -- (198:1) coordinate (a7);
\draw (198:1) -- (270:1) coordinate (a8);
\draw (270:1) -- (-18:1) coordinate (a9);
\draw (-18:2) -- ( 18:2) coordinate (a10);
\draw ( 18:2) -- ( 54:2) coordinate (a11);
\draw ( 54:2) -- ( 90:2) coordinate (a12);
\draw ( 90:2) -- (126:2) coordinate (a13);
\draw (126:2) -- (162:2) coordinate (a14);
\draw (162:2) -- (198:2) coordinate (a15);
\draw (198:2) -- (234:2) coordinate (a16);
\draw (234:2) -- (270:2) coordinate (a17);
\draw (270:2) -- (306:2) coordinate (a18);
\draw (306:2) -- (-18:2) coordinate (a19);
\end{scope}

\begin{scope}[yshift = -2cm, yscale = 0.15, xscale=0.5, rotate=13,decoration={markings, mark=at
  position 0.5 with {\arrow{>}}},postaction={decorate}, very thin]
\draw ( 18:3) -- ( 18:2);
\draw ( 90:3) -- ( 90:2);
\draw (162:3) -- (162:2);
\draw (234:3) -- (234:2);
\draw (306:3) -- (306:2);
\draw (-18:1) -- (-18:2);
\draw ( 54:1) -- ( 54:2);
\draw (126:1) -- (126:2);
\draw (198:1) -- (198:2);
\draw (270:1) -- (270:2);
\draw ( 18:3) -- ( 90:3) coordinate (b0);
\draw ( 90:3) -- (162:3) coordinate (b1);
\draw (162:3) -- (234:3) coordinate (b2);
\draw (234:3) -- (306:3) coordinate (b3);
\draw (306:3) -- ( 18:3) coordinate (b4);
\draw (-18:1) -- ( 54:1) coordinate (b5);
\draw ( 54:1) -- (126:1) coordinate (b6);
\draw (126:1) -- (198:1) coordinate (b7);
\draw (198:1) -- (270:1) coordinate (b8);
\draw (270:1) -- (-18:1) coordinate (b9);
\draw (-18:2) -- ( 18:2) coordinate (b10);
\draw ( 18:2) -- ( 54:2) coordinate (b11);
\draw ( 54:2) -- ( 90:2) coordinate (b12);
\draw ( 90:2) -- (126:2) coordinate (b13);
\draw (126:2) -- (162:2) coordinate (b14);
\draw (162:2) -- (198:2) coordinate (b15);
\draw (198:2) -- (234:2) coordinate (b16);
\draw (234:2) -- (270:2) coordinate (b17);
\draw (270:2) -- (306:2) coordinate (b18);
\draw (306:2) -- (-18:2) coordinate (b19);
\end{scope}

\begin{scope}[very thin, opacity= 0.5]
  \foreach \i in {0,1,..., 19} { \draw (a\i) -- (b\i); }
\end{scope}
}} &= \sum_{j=1}^{10}
\begin{array}{c} {\scriptstyle (2,1,0)}\\
   \NB{\tikz[scale = 0.5]{\begin{scope}[yshift = 2cm, yscale = 0.15, xscale=0.5, rotate=13,decoration={markings, mark=at
  position 0.5 with {\arrow{>}}},postaction={decorate}]
\draw ( 18:3) -- ( 18:2);
\draw ( 90:3) -- ( 90:2);
\draw (162:3) -- (162:2);
\draw (234:3) -- (234:2);
\draw (306:3) -- (306:2);
\draw ( 18:3) -- ( 90:3);
\draw ( 90:3) -- (162:3);
\draw (162:3) -- (234:3);
\draw (234:3) -- (306:3);
\draw (306:3) -- ( 18:3);
\draw (-18:1) -- (-18:2);
\draw ( 54:1) -- ( 54:2);
\draw (126:1) -- (126:2);
\draw (198:1) -- (198:2);
\draw (270:1) -- (270:2);
\draw ( 18:3) -- ( 90:3) coordinate (a0);
\draw ( 90:3) -- (162:3) coordinate (a1);
\draw (162:3) -- (234:3) coordinate (a2);
\draw (234:3) -- (306:3) coordinate (a3);
\draw (306:3) -- ( 18:3) coordinate (a4);
\draw (-18:1) -- ( 54:1) coordinate (a5);
\draw ( 54:1) -- (126:1) coordinate (a6);
\draw (126:1) -- (198:1) coordinate (a7);
\draw (198:1) -- (270:1) coordinate (a8);
\draw (270:1) -- (-18:1) coordinate (a9);
\draw (-18:2) -- ( 18:2) coordinate (a10);
\draw ( 18:2) -- ( 54:2) coordinate (a11);
\draw ( 54:2) -- ( 90:2) coordinate (a12);
\draw ( 90:2) -- (126:2) coordinate (a13);
\draw (126:2) -- (162:2) coordinate (a14);
\draw (162:2) -- (198:2) coordinate (a15);
\draw (198:2) -- (234:2) coordinate (a16);
\draw (234:2) -- (270:2) coordinate (a17);
\draw (270:2) -- (306:2) coordinate (a18);
\draw (306:2) -- (-18:2) coordinate (a19);
\end{scope}
\coordinate (t) at (0, 0.5);
\node at (t) [left, font = \small] {$\kappa_j$};
\coordinate (b) at (0, -0.5);
\node at (b) [right, font = \small] {$\kappa_j$};
\begin{scope}[yshift = -2cm, yscale = 0.15, xscale=0.5, rotate=13,decoration={markings, mark=at
  position 0.5 with {\arrow{>}}},postaction={decorate}]
\draw ( 18:3) -- ( 18:2);
\draw ( 90:3) -- ( 90:2);
\draw (162:3) -- (162:2);
\draw (234:3) -- (234:2);
\draw (306:3) -- (306:2);
\draw (-18:1) -- (-18:2);
\draw ( 54:1) -- ( 54:2);
\draw (126:1) -- (126:2);
\draw (198:1) -- (198:2);
\draw (270:1) -- (270:2);
\draw ( 18:3) -- ( 90:3) coordinate (b0);
\draw ( 90:3) -- (162:3) coordinate (b1);
\draw (162:3) -- (234:3) coordinate (b2);
\draw (234:3) -- (306:3) coordinate (b3);
\draw (306:3) -- ( 18:3) coordinate (b4);
\draw (-18:1) -- ( 54:1) coordinate (b5);
\draw ( 54:1) -- (126:1) coordinate (b6);
\draw (126:1) -- (198:1) coordinate (b7);
\draw (198:1) -- (270:1) coordinate (b8);
\draw (270:1) -- (-18:1) coordinate (b9);
\draw (-18:2) -- ( 18:2) coordinate (b10);
\draw ( 18:2) -- ( 54:2) coordinate (b11);
\draw ( 54:2) -- ( 90:2) coordinate (b12);
\draw ( 90:2) -- (126:2) coordinate (b13);
\draw (126:2) -- (162:2) coordinate (b14);
\draw (162:2) -- (198:2) coordinate (b15);
\draw (198:2) -- (234:2) coordinate (b16);
\draw (234:2) -- (270:2) coordinate (b17);
\draw (270:2) -- (306:2) coordinate (b18);
\draw (306:2) -- (-18:2) coordinate (b19);
\end{scope}

\begin{scope}[very thin, opacity= 0.5]
  \foreach \i in {0,1,..., 19} {
  \draw (a\i) -- (t);
  \draw (b\i) -- (b);
  }
\end{scope}
}} \\ {\scriptstyle (0,0,0)} 
\end{array}
+
\begin{array}{c} {\scriptstyle (2,0,0)}\\
   \NB{\tikz[scale = 0.5]{}} \\ {\scriptstyle (0,0,1)} 
\end{array}
+
\begin{array}{c} {\scriptstyle (1,1,0)}\\
   \NB{\tikz[scale = 0.5]{}} \\ {\scriptstyle (0,1,0) + (0,0,1)} 
\end{array} 
+
\begin{array}{c} {\scriptstyle (1,0,0)}\\
   \NB{\tikz[scale = 0.5]{}} \\ {\scriptstyle (0,1,1) + (0,0,2)} 
\end{array}
+
\begin{array}{c} {\scriptstyle (0,1,0)}\\
   \NB{\tikz[scale = 0.5]{}} \\ {\scriptstyle (0,1,1)} 
 \end{array}
 +
\begin{array}{c} {\scriptstyle (0,0,0)}\\
   \NB{\tikz[scale = 0.5]{}} \\ {\scriptstyle (0,1,2)} 
\end{array}. 
\end{align*}
or
\begin{align}
    \NB{\tikz[scale =0.5]{}} &= \sum_{j=1}^{10} \sum_{i = 1}^6
\begin{array}{c} {x_i}\\
   \NB{\tikz[scale = 0.5]{}} \\ {y_i} 
\end{array}
\end{align}

Proof of this claim is identical to that of (\ref{eq_theta_dec})
except needing to take into account that $\Gd$ has ten distinct Kempe classes 
while the $\Theta$-foam has only one. This is why we need to sum over
these different Kempe classes.

Proving identity (\ref{eq:dodec-id-split-1}) is equivalent to proving that for
any conical foam $F$ bounding $\Gd\sqcup (-\Gd)$, the foam evaluation satisfies:
\begin{align}
     \kup{(\Gd\times[0,1])\cup F} &= \sum_{j=1}^{10} \sum_{i=1}^6
     \kup{x_iP_j y_i\cup F}
\end{align}


Fix a Kempe-class $\kappa_{j_0}$ with $1 \leq j_0 \leq 10$. Sets of admissible colorings of  conical foams $(x_i P_{j_0} y_i\cup F)_{1\leq i \leq 6}$  appearing in the sum are in natural bijections  with $\col{P_{j_0}\cup F}$. Let $c$ be an element of
$\col{P_{j_0}\cup F}$. It induces colorings of $C_+(\kappa_{j_0})$ and of $C_-(\kappa_{j_0})$
and therefore two colorings $c_+$ and $c_-$ of $\Gd$. Note that $c_+$ 
and $c_-$ are both in the Kempe-class $\kappa_{j_0}$ but might not be equal.

If $c_+ = c_-$, coloring $c$ induces an admissible coloring of
$(\Gd\times[0,1])\cup F$, still denoted $c$. The same computation as the one proving (\ref{eq:theta_eq_coloring}) gives:
\begin{align*}
     \kup{(\Gd\times[0,1])\cup F,c} &= \sum_{i=1}^6 \kup{x_i P_{j_0} y_i \cup F , c}.
\end{align*}

Note, furthermore, that any coloring of $(\Gd\times[0,1])\cup F$ is
obtained in this manner (for arbitrary $j_0$).

If, on the contrary, $c_+\neq c_-$, 
the same computation as the one proving (\ref{eq:theta_diff_coloring}) gives:
\begin{align*}
     \sum_{i=1}^6 \kup{x_i P_{j_0} y_i \cup F , c} =0.
\end{align*}

Summing over all Kempe classes and all colorings of $P_{j_0}\cup F$, we
obtain identity~(\ref{eq:dodec-id-split-1}).

It remains to show that the 60 terms on the right-hand side of  are pairwise orthogonal idempotents.   

Orthogonality of elements involving different Kempe classes follows from the absence of admissible colorings of the conical foam obtained by concatenating these elements. Idempotent property and orthogonality for elements involving the same Kempe class follow from the same computation proving identity~(\ref{eq_orthogonality}).
\end{proof}

There are two problems with extending our conical foam  evaluation to foams $F$ with more general singular vertices. First, the evaluation $\kup{F}$  may not be homogeneous. Second, the evaluation may have non-trivial denominators and be a rational function rather than a polynomial. 
For instance, consider
\[
F := \NB{\tikz[]{\begin{scope}
\begin{scope}[yshift = 2cm, yscale = 0.15, xscale=0.5, rotate = 13]
    \coordinate (A) at (  0: 3);
    \coordinate (B) at ( 90: 3);
    \coordinate (C) at (180: 3);
    \coordinate (D) at (270: 3);
    \coordinate (a) at (  0: 1);
    \coordinate (b) at ( 90: 1);
    \coordinate (c) at (180: 1);
    \coordinate (d) at (270: 1);
    \draw (A) -- (B);
    \draw (B) -- (C) coordinate[pos =0.65] (x);
    \draw (C) -- (D) coordinate[pos =0.3] (y) coordinate[pos =0.4] (z);
    \draw (D) -- (A);
    \draw (a) -- (b);
    \draw (b) -- (c);
    \draw (c) -- (d);
    \draw (d) -- (a);
    \draw (a) -- (A);
    \draw (b) -- (B);
    \draw (c) -- (C);
    \draw (d) -- (D);
  \end{scope}
  \coordinate (p) at (0, 0.5);
  \coordinate (q) at (0, 3.5);
    \begin{scope}[very thin, opacity = 0.5]
      \foreach \i in {A, B, C, D, a, b, c, d}{
      \draw (\i) -- (p);
      \draw (\i) -- (q);
      }
    \end{scope}
      \foreach \i in {x,y,z}{
      \fill (\i) circle (0.5mm);
      }
  \end{scope}
}}.
\]
Non-homogeneous conical foam $F$ consists of two cones over the square web glued along their common boundary with two dots on one facet, one dot on an adjacent facet, and no dots on the other facets. Since the square web has only one Kempe class of colorings, $F$ can be considered an adorned conical foam. A straightforward computation shows that foam evaluation $\kup{F}$ is given by the following formula:
\[
\kup{F} = \frac{1 + X_1^2 +X_2^2+ X_2^3 + X_3^2 + X_1X_2 + X_1X_3 + X_2X_3}{(X_1+X_2)(X_1+X_3)(X_2+ X_3)},
\]
which is neither a polynomial, nor a homogeneous rational function. 

Here is what goes wrong with evaluations for such more general conical foams. Suppose one is doing, say, a $(1,2)$-Kempe move on an $\SS^2$-component of $F_{12}(c)$ for some coloring $c$ of $F$ (these are the components that contribute to the denominators in $\kup{F,c}$). To ensure cancellation with $x_1+x_2$ in the denominator, one needs the balancing relation 
\begin{equation}\label{eq_balance}
    \chi(F_{13}(c))+\chi(F_{23}(c)) = \chi(F_{13}(c')) + \chi(F_{23}(c'))
\end{equation}
on the Euler characteristics of the bicolored surfaces for  $c$ and $c'$, see lemmas 2.12 and 2.18 in~\cite{KR1}. This relation, in general, fails to hold if we do not impose  homogeneity requirement (\ref{eq:degree_class}) from Section~\ref{sec:conical-foams} on the Kempe classes and restrict to such classes only. Quantity $d(\Gamma,c)$ in (\ref{eq:degree_class}) describes the contribution of the vertex which is a cone over $\Gamma$ to the sum of Euler characteristics 
\begin{equation*}
    \chi(F_{12}(c))+\chi(F_{13}(c))+\chi(F_{23}(c)) ,
\end{equation*}
see also Lemma~\ref{degree}, 
and this quantity needs to be invariant under Kempe moves to have a homogeneous and integral (polynomial) evaluation. For $c,c'$ related by a $(1,2)$-Kempe move, $F_{12}(c)\cong F_{12}(c')$, so the invariance reduces to that in (\ref{eq_balance}).


\section{Homogeneous webs}
\label{sec:kempe}
In Section~\ref{sec:tait-kempe} we introduced weakly homogeneous, semi-homogeneous and homogeneous webs. 
In this section we consider homogeneous webs and raise some questions
related to them. It is natural here to work with webs in $\SS^2$ rather than in  $\RR^2$.

We don't know much about homogeneous webs, in particular, we don't know if there exists a semi-homogeneous but not homogeneous web. 


\begin{dfn}
  For webs $\Gamma$ and $\Gamma'$  a map 
  $\varphi\co\col{\Gamma} \to \col{\Gamma'}$ \emph{intertwines Kempe
  equivalence} if two colorings  $c_1$ and $c_2$ of $\Gamma$ are Kempe equivalent if and only if $\varphi(c_1)$
  and $\varphi(c_2)$ are Kempe equivalent. 
\end{dfn}

In particular, for such a map, all the
  pre-images of a given coloring of $\Gamma'$ are Kempe equivalent.

The operation of converting a small neighbourhood of a vertex of a web
$\Gamma$ into a triangular region with three vertices and three
outgoing regions is called \emph{blowing up a vertex} of $\Gamma$.
\[
  \NB{\tikz[]{\begin{scope}
    \begin{scope}
        \draw (0,0) -- (90:1);
        \draw (0,0) -- (-30:1);
        \draw (0,0) -- (-150:1);
    \end{scope}
    \node at (2.5, 0) {$\rightsquigarrow$}; 
    \begin{scope}[xshift = 5cm]
        \draw (90:0.7) -- (90:1);
        \draw (-30:0.7) -- (-30:1);
        \draw (-150:0.7) -- (-150:1);
        \draw (90: 0.7)--(-30: 0.7) -- (-150: 0.7)-- cycle;
    \end{scope}
\end{scope}}}
\]

\begin{prop}\label{prop_blow}
If $\Gamma'$ is obtained from $\Gamma$ by blowing up a
  vertex, then there exists a canonical bijection between the colorings of
  $\Gamma$ and $\Gamma'$. This bijection preserves the degree 
  and intertwines Kempe equivalence. In particular, $\Gamma$ is weakly
  homogeneous (resp.{} semi-homogeneous or homogeneous) if and
  only if $\Gamma'$ is.
\end{prop}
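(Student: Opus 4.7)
The plan is to build the bijection explicitly, then check degree preservation and Kempe-equivalence intertwining by a purely local analysis near the blown-up vertex. Throughout let $v$ be the vertex of $\Gamma$ being blown up, with incident edges $e_1,e_2,e_3$, and let $v_1,v_2,v_3$ be the new vertices of the triangle in $\Gamma'$, with $v_i$ incident to $e_i$ and with triangle edges $v_iv_j$.

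First I would define the map $\varphi\co\col{\Gamma}\to\col{\Gamma'}$. Given $c\in\col{\Gamma}$, the three colors $c(e_1),c(e_2),c(e_3)$ are pairwise distinct by admissibility at $v$. On $\Gamma'$, keep the color of each $e_i$ and declare the triangle edge opposite $v_i$ (i.e.\ $v_jv_k$) to have color $c(e_i)$. One checks at each $v_i$ that the three incident edges carry the three distinct colors, so $\varphi(c)\in\col{\Gamma'}$. Conversely, given $c'\in\col{\Gamma'}$, admissibility at the $v_i$'s forces the triangle coloring to be precisely the one just described as soon as $c'(e_1),c'(e_2),c'(e_3)$ are fixed; so the restriction $c'\mapsto c'|_{E(\Gamma)}$ is a well-defined inverse.

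Next I would verify degree preservation. For a pair $\{i,j\}\subset\{1,2,3\}$, let $k$ be the third color. In $\Gamma_{ij}(c)$ the vertex $v$ lies on a single bichromatic path locally, namely $e_a$--$v$--$e_b$ where $\{c(e_a),c(e_b)\}=\{i,j\}$. In $\Gamma'_{ij}(\varphi(c))$ exactly two triangle edges survive (those not colored $k$), and they meet at the vertex $v_c$ opposite $e_c$, forming the path $e_a$--$v_a$--$v_av_c$--$v_c$--$v_cv_b$--$v_b$--$e_b$. Thus the local $X$-through-$v$ is simply replaced by a longer arc, so the cycles of $\Gamma_{ij}(c)$ and $\Gamma'_{ij}(\varphi(c))$ are in canonical bijection. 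Hence $d_{ij}(\Gamma,c)=d_{ij}(\Gamma',\varphi(c))$ for all pairs, and summing yields $d(\Gamma,c)=d(\Gamma',\varphi(c))$.

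For Kempe equivalence I would show that single Kempe moves correspond under $\varphi$. A bichromatic cycle $C$ in $\Gamma_{ij}(c)$ either avoids $v$, or passes through $v$ exactly once via $e_a$ and $e_b$ as above; in the latter case the local analysis of the previous paragraph identifies $C$ with a unique cycle $C'$ in $\Gamma'_{ij}(\varphi(c))$ passing through the triangle. Conversely every cycle of $\Gamma'_{ij}(\varphi(c))$ either misses the triangle (hence comes from a cycle of $\Gamma_{ij}(c)$ avoiding $v$) or traverses the unique $e_a$-to-$e_b$ arc through the triangle, hence corresponds to a cycle through $v$. A direct check—swap colors along $C$ in $\Gamma$ and observe that the forced triangle coloring at the blown-up vertex changes in exactly the way produced by swapping along $C'$ in $\Gamma'$—shows that $\varphi$ conjugates the Kempe move along $C$ to the Kempe move along $C'$. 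Iterating, $c_1$ and $c_2$ are Kempe equivalent in $\Gamma$ iff $\varphi(c_1)$ and $\varphi(c_2)$ are Kempe equivalent in $\Gamma'$. Combined with the degree equality, this gives a degree-preserving bijection of Kempe classes, from which weak homogeneity, semi-homogeneity and homogeneity are each preserved in both directions.

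The main subtlety is the bookkeeping in the Kempe step: one must verify that when the bichromatic cycle passes through the triangle, the two triangle edges lying on the cycle are precisely the ones whose colors lie in $\{i,j\}$, and that after swapping these along with the rest of $C'$ one recovers the forced triangle coloring associated to the swapped coloring on $\Gamma$. This is a finite local check in the three cases indexed by the color of $e_3$ (i.e.\ which color is missing from $\{i,j\}$), so no serious obstacle arises.
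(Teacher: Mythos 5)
Your proof is correct and follows essentially the same route as the paper: the same explicit bijection (triangle edge opposite $v_i$ inherits the color of $e_i$), the same canonical identification of connected components of $\Gamma_{ij}(c)$ with those of $\Gamma'_{ij}(\varphi(c))$ to get degree preservation, and the same component-by-component matching of Kempe moves; you simply spell out the local checks that the paper delegates to a figure. (One cosmetic slip: the two surviving triangle edges meet at the vertex $v_c$ \emph{incident to} $e_c$, not ``opposite'' it, but your displayed path is the right one.)
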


\begin{proof}
The bijection between colorings is given in Figure~\ref{fig:triangle-colorings}. Let $c$ be a coloring of $\Gamma$ and denote by $c$ the corresponding coloring of $\Gamma'$. For $1\le i < j\le 3$ there is a canonical bijection between connected components of $\Gamma_{ij}(c)$ and $\Gamma'_{ij}(c)$ (see Section~\ref{sec:colorings} for this notation).  Kempe moves along components of $\Gamma_{ij}(c)$ correspond to Kempe moves along components of $\Gamma'_{ij}(c)$.
\begin{figure}
    \centering
    \NB{\tikz[]{\input{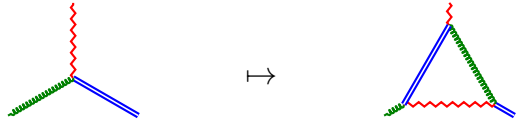}}}
    \caption{Bijection between colorings of $\Gamma$ and $\Gamma'$ in Proposition \ref{prop_blow}.}
    \label{fig:triangle-colorings}
\end{figure}

\end{proof}

\begin{prop}\label{prop:digon}
  Let $\Gamma$ and $\Gamma'$ be two webs which are identical except in
  a small ball where they are related as follows:
  \[
    \NB{\tikz[]{\begin{scope}
    \begin{scope}
        \draw (-1,0) -- (-0.5, 0) .. controls +(0.3, 0.3) and + (-0.3, 0.3) .. (0.5,0) -- (1,0);
        \draw (-0.5, 0) .. controls +( 0.3, -0.3) and + (-0.3, -0.3) .. (0.5,0);
        \node[below] at (0, -0.3) {$\Gamma$};
    \end{scope}
    \begin{scope}[xshift = 5cm]
        \draw (-1,0) -- (1,0);
        \node[below] at (0, -0.3) {$\Gamma'$};
    \end{scope}
\end{scope}}}.
  \]
  Then there exists a canonical 2-to-1 map from the set of colorings
  of $\Gamma$ to that of $\Gamma'$. This map  decreases the degree or a coloring by $1$ and intertwines Kempe
  equivalence. In particular, $\Gamma$ is weakly homogeneous (resp.{}
  semi-homogeneous or homogeneous) if and only if $\Gamma'$ is.
\end{prop}

\begin{proof}
The 2-to-1 map is given in Figure~\ref{fig:digon-colorings}. 
\begin{figure}
    \centering
    \NB{\tikz[]{\input{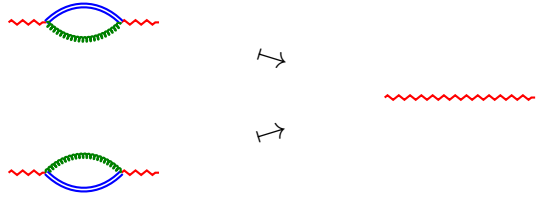}}}
    \caption{2-to-1 map between colorings of $\Gamma$ and of $\Gamma'$ of Proposition \ref{prop:digon}.}
    \label{fig:digon-colorings}
\end{figure}
Denote $i,j$ and $k$ the three elements of $\{1,2,3\}.$ Let $c$ is a coloring of $\Gamma'$ and denote by $c_1$ and $c_2$ the corresponding colorings of $\Gamma$. Let $c$ associate $i$ to the distinguished edge of $\Gamma'$. Colorings $c_1$ and $c_2$ are related by a Kempe move. Besides this Kempe move, all Kempe moves that one can do on $c_1$ corresponds canonically to Kempe moves on $c$ (and vice versa). Moreover, one has: $d_{ij}(\Gamma, c) = d_{ij}(\Gamma', c_1) = d_{ij}(\Gamma', c_2)$, $d_{ik}(\Gamma, c) = d_{ik}(\Gamma', c_1) = d_{ik}(\Gamma', c_2)$  and $d_{jk}(\Gamma, c) +1 = d_{jk}(\Gamma', c_1) = d_{jk}(\Gamma', c_2)$. 
\end{proof}

Among homogeneous webs, one class seems of peculiar interest: the webs for which Kempe classes coincide with orbits of the action of $S_3$ on the set of
colorings. 

\begin{dfn}\label{dfn:kempsmall}
A web $\Gamma$ is called \emph{Kempe-small} if $\col{\Gamma} \neq \emptyset$ and any Tait coloring $c$ of $\Gamma$ has the property that
its $(i,j)$-subcoloring $\Gamma_{ij}(c)$ is connected for all $1\le i < j \le 3$.
\end{dfn}

Examples of Kempe-small webs include the circle, the $\Theta$-web, the tetrahedral
web (both shown in Figure~\ref{fig:exa-web}) and the dodecahedral web, see
Figure~\ref{fig:dodecahedron}.
Note that a Kempe-small graph is necessarily connected. 

\begin{rmk}\label{rmk:kempe-small-abstract-graph}There is no obstruction to define \emph{Kempe-smallness}, \emph{weak homogeneity}, \emph{semi-homogeneity} or \emph{homogeneity} for abstract trivalent graphs. 
\end{rmk}


The next statement follows from Proposition~\ref{prop_blow}.

\begin{prop} If $\Gamma'$ is obtained from $\Gamma$ by blowing up a
  vertex, then one of $\Gamma$, $\Gamma'$ is Kempe-small if and only
  if the other one is.
\end{prop}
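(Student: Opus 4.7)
The plan is to reduce this to the structural facts already established in Proposition~\ref{prop_blow}. Kempe-smallness of $\Gamma$ requires two things: that $\col{\Gamma} \neq \emptyset$, and that for every $c \in \col{\Gamma}$ and every pair $i<j$, the bichromatic subgraph $\Gamma_{ij}(c)$ has exactly one connected component, i.e.\ $d_{ij}(\Gamma,c) = 1$. I would therefore just check that each of these ingredients is preserved by the bijection in Proposition~\ref{prop_blow}.

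First, since Proposition~\ref{prop_blow} gives a canonical bijection $\col{\Gamma} \cong \col{\Gamma'}$, non-emptiness of the coloring set transfers from one graph to the other. Next, I would recall from the proof of Proposition~\ref{prop_blow} that under this bijection, for every $c \in \col{\Gamma}$ with corresponding $c' \in \col{\Gamma'}$ and every pair $i \neq j$ in $\{1,2,3\}$, there is a canonical bijection between connected components of $\Gamma_{ij}(c)$ and of $\Gamma'_{ij}(c')$. Therefore $d_{ij}(\Gamma,c) = d_{ij}(\Gamma',c')$ for all $c$ and all pairs.

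Combining these two observations, the condition that every $d_{ij}(\Gamma,c)$ equals $1$ is equivalent to the same statement for $\Gamma'$, which is the conclusion. The only mild subtlety—and what I would view as the main (but minor) point to verify carefully—is that the bijection on connected components is genuinely well-defined across the blow-up. Locally at the blown-up vertex with outgoing colors $i,j,k$, the triangle in $\Gamma'$ has edges of colors $k$, $i$, $j$ (each triangle edge receives the unique color compatible with the Tait condition at its two endpoints). In $\Gamma_{ij}(c)$ the two outgoing edges of colors $i$ and $j$ meet at a single vertex; in $\Gamma'_{ij}(c')$ the same two outgoing edges are joined by a two-edge path through the triangle vertex $v_k$, while the triangle edge of color $k$ is entirely absent from $\Gamma'_{ij}(c')$. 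Thus the component structure in a neighborhood of the vertex is unchanged: two arcs merge at a single bivalent point in $\Gamma_{ij}(c)$ exactly when they merge along a short path in $\Gamma'_{ij}(c')$, with the rest of each bichromatic subgraph unaffected. This local check, performed for each of the three pairs $(i,j)$, completes the argument.
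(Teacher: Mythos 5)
Your proposal is correct and follows essentially the same route as the paper, which simply derives this statement from Proposition~\ref{prop_blow} using the canonical bijection of colorings and of connected components of the bicolored subgraphs $\Gamma_{ij}(c)$ and $\Gamma'_{ij}(c')$. Your explicit local check at the blown-up vertex (the two-edge path through $v_k$ replacing the bivalent point, with the $k$-colored triangle edge absent) is exactly the verification implicit in the paper's appeal to that proposition.
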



For $i=1,2$, consider a web $\Gamma_i$ on the 2-sphere $\SS^2$ and a vertex $v_i$ of $\Gamma_i$. A \emph{vertex-connected sum of $\Gamma_1$ and $\Gamma_2$ along $(v_1, v_2)$} is a web $\Gamma$ in $\SS^2$ obtained by removing small disk neighborhoods of $v_1,v_2$ and gluing the complements of this neighbourhoods together to get a trivalent graph embedded in $\SS^2$. There are six ways to form a vertex-connected sum along $v_1,v_2$  due to six ways to match three legs of $\Gamma_1$ at $v_1$ and three legs of $\Gamma_2$ at $v_2$. Three out of these six ways require reversing the orientation of one of $\SS^2$'s.

\begin{figure}[ht]
    \centering
    \NB{\tikz[]{\input{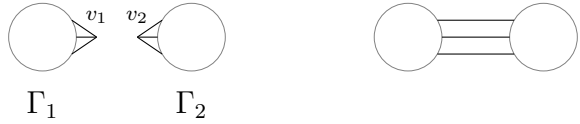}}}
    \caption{The disjoint union of $\Gamma_1$ and $\Gamma_2$ and one of their vertex-connected sum along $v_1$ and $v_2$.}    \label{fig:vertex-connected-sum2}
\end{figure}


The $\Theta$-web is a neutral element for the vertex-connected sum: vertex-connected sum of $\Gamma_1$ and $\Theta$ is isomorphic to  $\Gamma_1$. Performing a vertex-connected sum with the tetrahedral web amounts to blowing up a vertex.


\begin{prop}
  \label{prop:connected-sum}
  A vertex-connected sum of two webs $\Gamma'$ and $\Gamma''$ is
  Kempe-small if and only if both $\Gamma'$ and $\Gamma''$ are Kempe-small.
\end{prop}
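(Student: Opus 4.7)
The plan is to establish a bijection between Tait colorings of $\Gamma$ and matching pairs of Tait colorings of $\Gamma'$ and $\Gamma''$, and then to track how the bichromatic subgraphs behave under the vertex-connected sum. Fix a gluing $\sigma$ that matches the three legs at $v'$ with those at $v''$. Any Tait coloring $c$ of $\Gamma$ induces colorings $c'$ of $\Gamma'$ and $c''$ of $\Gamma''$ which agree on the legs via $\sigma$, and conversely every such matching pair yields a unique $c\in \col{\Gamma}$. Since $S_3$ acts on colorings by permutation of colors, and since the three legs at $v'$ (or $v''$) necessarily carry all three colors, for any $c'\in \col{\Gamma'}$ one can always find $c''\in \col{\Gamma''}$ matching it on the legs, provided $\col{\Gamma''}\neq\emptyset$. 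In particular $\col{\Gamma}\neq\emptyset$ iff both $\col{\Gamma'}\neq\emptyset$ and $\col{\Gamma''}\neq\emptyset$.

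The core of the argument is the identity
\[
d_{ij}(\Gamma, c) \;=\; d_{ij}(\Gamma', c') \;+\; d_{ij}(\Gamma'', c'') \;-\; 1
\]
valid for every matching triple $(c,c',c'')$ and each pair $\{i,j\}\subset\{1,2,3\}$. Indeed, $v'$ is incident to exactly two edges with colors in $\{i,j\}$, hence lies on a unique cycle $C'$ of $\Gamma'_{ij}(c')$; similarly $v''$ lies on a unique cycle $C''$ of $\Gamma''_{ij}(c'')$. The vertex-connected sum removes $v'$ and $v''$, turning $C'$ and $C''$ into two paths, and then the gluing $\sigma$ (which identifies same-colored legs, by compatibility of $c'$ and $c''$) splices these paths together along the $i$- and $j$-legs into a single cycle of $\Gamma_{ij}(c)$. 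All other cycles of $\Gamma'_{ij}(c')$ and $\Gamma''_{ij}(c'')$ are unchanged, giving the count.

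The proposition follows from the immediate corollary that $d_{ij}(\Gamma,c)=1$ iff $d_{ij}(\Gamma',c')=1$ and $d_{ij}(\Gamma'',c'')=1$. If $\Gamma'$ and $\Gamma''$ are Kempe-small, then for any $c\in \col{\Gamma}$ the induced $c'$, $c''$ satisfy $d_{ij}=1$ for all pairs, hence so does $c$, and $\Gamma$ is Kempe-small. Conversely, if $\Gamma$ is Kempe-small, take any $c'\in \col{\Gamma'}$, use the $S_3$-action to find a matching $c''\in \col{\Gamma''}$, and let $c\in\col{\Gamma}$ be the resulting coloring; the identity then forces $d_{ij}(\Gamma',c')=1$ for every $\{i,j\}$. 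The symmetric argument handles $\Gamma''$.

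The main point requiring care is the verification of the cycle-counting identity, specifically that the two distinguished cycles $C'$ and $C''$ splice into exactly one cycle regardless of the choice of gluing $\sigma$, and handling degenerate configurations such as digons containing $v'$ or $v''$, or vertexless loops in $\Gamma'_{ij}(c')$ disjoint from $v'$ that simply survive the operation unchanged. Everything else is straightforward bookkeeping with the $S_3$-action and the definition of Kempe-smallness.
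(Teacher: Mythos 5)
Your overall strategy coincides with the paper's: decompose Tait colorings of the vertex-connected sum $\Gamma$ into matching pairs of colorings of $\Gamma'$ and $\Gamma''$, and observe that each bicolored subgraph $\Gamma_{ij}(c)$ is obtained by splicing $\Gamma'_{ij}(c')$ and $\Gamma''_{ij}(c'')$ along the distinguished cycles through $v'$ and $v''$. Your identity $d_{ij}(\Gamma,c)=d_{ij}(\Gamma',c')+d_{ij}(\Gamma'',c'')-1$ is correct and packages the connectedness bookkeeping in a pleasantly quantitative way; the use of the $S_3$-action to produce a matching $c''$ for a given $c'$ is also fine.

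There is, however, one genuine gap: the opening assertion that every Tait coloring $c$ of $\Gamma$ induces Tait colorings $c'$ and $c''$ of the summands. For this you must prove that the three edges of $\Gamma$ crossing the gluing circle receive three \emph{distinct} colors under $c$; only then can they be capped off by the vertices $v'$ and $v''$ to give admissible colorings. The justification you offer --- that the three legs at $v'$ (or $v''$) necessarily carry all three colors --- concerns colorings of $\Gamma'$ and $\Gamma''$, where $v'$ and $v''$ are genuine trivalent vertices; it says nothing about a coloring of $\Gamma$, where no vertex sits on the gluing circle. A priori a Tait coloring of $\Gamma$ could assign colors $1,1,2$ to the three crossing edges; such a coloring would not decompose, would escape your bijection, and hence would not be covered by your forward direction (nor by your claim that $\col{\Gamma}\neq\emptyset$ forces $\col{\Gamma'},\col{\Gamma''}\neq\emptyset$). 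This is precisely the point on which the paper spends the substantive part of its converse direction: identify the three colors with the nonzero elements of $\ZZ_2\times\ZZ_2$, so that a Tait coloring becomes a nowhere-zero flow; then for any closed curve transverse to $\Gamma$ the sum of the colors over the intersection points vanishes, and three nonzero elements of $\ZZ_2\times\ZZ_2$ summing to zero are automatically pairwise distinct. Adding this lemma (or an equivalent parity argument) makes your proof complete.
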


\begin{proof}
Denote by $\Gamma$ the vertex-connected sum of $\Gamma'$ and $\Gamma''$. 
  Suppose that $\Gamma$ is Kempe-small. 
  Let $c'$ and $c''$ be
  colorings of $\Gamma'$ and $\Gamma''$ respectively. Applying an 
  $S_3$-symmetry to $c''$, if necessary, one can associate a coloring
  $c$ of $\Gamma$ to $c',c''$. Since $\Gamma$ is Kempe-small, $\Gamma_{ij}(c)$ is
  connected for any $\{i,j\}\subset \{1,2,3\}$. This implies that
  $\Gamma'_{ij}(c')$ and $\Gamma''_{ij}(c'')$ are connected for any
  $\{i,j\}\subset \{1,2,3\}$ and finally that $\Gamma'$ and $\Gamma''$
  are Kempe-small.

  Conversely, suppose that $\Gamma'$ and $\Gamma''$ are
  Kempe-small. Let $c$ be a coloring of $\Gamma$. We claim that it
  induces colorings $c'$ and $c''$ of $\Gamma'$ and $\Gamma''$. The
  only thing to show is that the three edges of $\Gamma$ involved in
  the vertex-connected sum are given different colors by $c$. 
  Choose
  any injective map
  $\varphi\co\{1,2,3\} \to \ZZ_2\times \ZZ_2 \setminus \{e\}$, where $\{e\}$ is the identity element of the abelian group $\ZZ_2\times \ZZ_2$. The map
  $\varphi \circ c \co E(\Gamma) \to \ZZ_2\times \ZZ_2 \setminus \{e\}$
  can be though of as an edge-coloring of $\Gamma$ by
  $\ZZ_2\times \ZZ_2 \setminus \{e\}$. It satisfies a flow condition:
  for any planar closed curve intersecting $\Gamma$ transversely, the
  product in $\ZZ_2\times \ZZ_2$ of the colors over all intersection points is $e$. 
  
  Indeed, this
  condition is trivially satisfied if the curve does not intersect
  $\Gamma$ and is invariant under pushing the curve through a vertex or an edge of
  $\Gamma$. The flow condition implies that the colors of the three
  edges involved in the connected sum are distinct. 

  Finally, $\Gamma_{ij}(c)$ is the connected sum of
  $\Gamma'_{ij}(c')$ and $\Gamma''_{ij}(c'')$ and is therefore
  connected.
\end{proof}

\begin{prop}
  \label{prop:KS-square}
  Let $\Gamma$, $\Gamma_v$ and $\Gamma_h$ be three webs which are
  identical except in a disk $B$ where they are related as follows:
  \[
    \NB{\tikz[]{\begin{scope}[scale =0.8]
    \begin{scope}
        \draw (0.5,-0.5) -- (0.5,0.5) -- (-0.5, 0.5) -- (-0.5, -0.5) -- (0.5, -0.5);
        \draw (0.5, 0.5) -- (1,1);
        \draw (-0.5, 0.5) -- (-1,1);
        \draw (0.5, -0.5) -- (1,-1);
        \draw (-0.5, -0.5) -- (-1,-1);
        \node at (0, -1.3) {$\Gamma$};
    \end{scope}
    \begin{scope}[xshift = 6cm]
        \draw (1, -1) .. controls +(-0.5, 0.5) and +(-0.5, -0.5) .. (1,1);
        \draw (-1,  -1) .. controls +(0.5, 0.5) and +(0.5, -0.5) .. (-1,1);
        \node at (0, -1.3) {$\Gamma^v$};
    \end{scope}
    \begin{scope}[xshift = 12cm]
        \draw (1, -1) .. controls +(-0.5, 0.5) and +(0.5, 0.5) .. (-1,-1);
        \draw (1,  1) .. controls +(-0.5, -0.5) and +(0.5, -0.5) .. (-1,1);
        \node at (0, -1.3) {$\Gamma^h$};
    \end{scope}
\end{scope}}}.
  \]
  Then $\Gamma$ is Kempe-small if and only if one of the following holds:
\begin{itemize}
    \item both $\Gamma^v$ and $\Gamma^h$ are Kempe-small,
    \item $\Gamma^v$ is Kempe-small and $\Gamma^h$ admits no coloring,
    \item $\Gamma^h$ is Kempe-small and $\Gamma^v$ admits no coloring.
\end{itemize}
\end{prop}

\begin{proof}
  Suppose  first that $\Gamma$ is Kempe-small and let us prove that $\Gamma^v$ and $\Gamma^h$ fall into one of these three cases. Let $c$ be a coloring of $\Gamma$. Up to an $S_3$-symmetry, we can suppose that $c$ has one of the three types shown in Figure~\ref{fig:colorings-of-square}.
  \begin{figure}
      \centering
      \NB{\tikz[]{\input{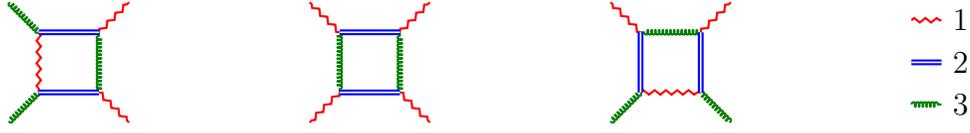}}}
      \caption{Possible colorings of the square.}
      \label{fig:colorings-of-square}
  \end{figure}
  This implies that either $\Gamma^v$, $\Gamma^h$ or both admit a Tait coloring. Hence it remains to show that if $\Gamma^h$ (resp.{} $\Gamma^v$) admits a coloring then it is Kempe-small. 
  
  Without loss of generality, one can assume that  $\Gamma^v$ admits a coloring $c_v$.
  Suppose that $c_v$ assigns the same color to
  the two edges of $\Gamma^v$ intersecting the disk $B$.  Up to an $S_3$-symmetry, we can
  suppose that these edges are colored by $1$. There are two Kempe-equivalent
  colorings $c_1$ and $c_2$ of $\Gamma$ which are induced by $c_v$ but 
  differ in $B$, see Figure~\ref{fig:cvc1c2}. 
  \begin{figure}[ht]
    \centering
    \NB{\tikz[]{\input{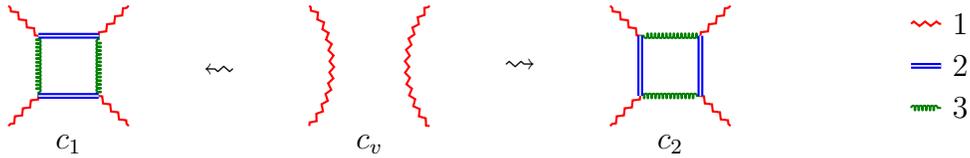}}}
    \caption{Coloring $c_v$ of $\Gamma^v$ induces two colorings $c_1$
    and $c_2$ of $\Gamma$.} \label{fig:cvc1c2}
  \end{figure}
  Since $\Gamma$ is Kempe-small, 
  $\Gamma_{12}(c_1)$ is connected. This in turns implies that
  $\Gamma_{12}(c_2)$ is not connected, see Figure~\ref{fig:c1c2}), and contradicts $\Gamma$ being Kempe-small.
  \begin{figure}[ht]
    \centering
    \NB{\tikz[]{\input{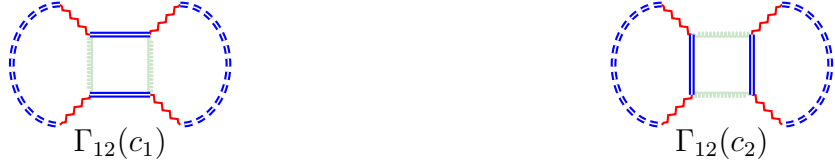}}}
    \caption{Colorings $c_1$ and $c_2$ of $\Gamma$: planarity of $\Gamma$ implies that one of $\Gamma_{12}(c_1)$ or $\Gamma_{12}(c_2)$ is not connected.} \label{fig:c1c2}
  \end{figure}
  Hence $c_v$  assigns different colors to the two edges in $B$. There is a unique coloring of $c$ of $\Gamma$ which coincides with
  $c_v$ except in $B$. Since $\Gamma$ is Kempe-small, $\Gamma_{12}(c)$, $\Gamma_{13}(c)$ and $\Gamma_{23}(c)$, are Hamitonian cycles, and $\Gamma^v_{12}(c_v)$, $\Gamma^v_{13}(c_v)$,  $\Gamma^v_{23}(c_v)$ are connected.

  Conversely, suppose that $\Gamma^v$  and $\Gamma^h$ satisfy one of the three conditions listed in the proposition.
  Let $c$ be a coloring of $\Gamma$. Suppose
  that it assigns the same color to the four edges pointing out of the square. Up to an  $S_3$-symmetry, one may suppose that this color is $1$. The coloring $c$ induces colorings $c_v$ and $c_h$ of
  $\Gamma^v$ and $\Gamma^h$ respectively, see Figure~\ref{fig:ccvch}. Hence we are in the case where $\Gamma^v$ and $\Gamma^h$ are Kempe-small.
  \begin{figure}[ht]
    \centering
    \NB{\tikz[]{\input{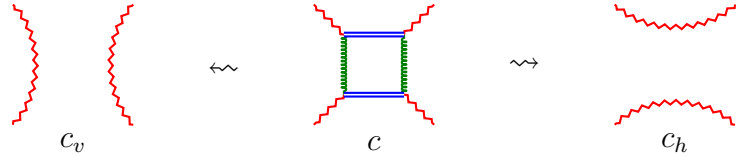}}}
    \caption{Coloring $c$ of $\Gamma$ induces coloring $c_v$ of
    $\Gamma_v$ and coloring $c_v$ of $\Gamma_v$.} \label{fig:ccvch}
  \end{figure}
  Graphs $\Gamma^h_{12}(c)$ and $\Gamma^v_{12}(c)$ are embedded in $\RR^2$ and related by a saddle move. Hence one of them is not connected. tThis is not possible with webs $\Gamma^v$ and $\Gamma^h$ being Kempe-small, thus not all the edges of the square have the same color in $c$.   
  Up to an $S_3$-symmetry, there are only two  possibilities for $c$ in $B$:
  \[
    \NB{\tikz[]{\begin{scope}[scale =0.8]
  \begin{scope}
    \node[black] at (0, -1.3) {$c$}; 
        \begin{scope}[green!50!black,decoration={coil,amplitude=0.3mm, segment length=0.5mm}, thick]
            \draw[decorate] (0.5,-0.5) -- (0.5,0.5);
            \draw[decorate] (-0.5, -0.5) -- (-1,-1);
            \draw[decorate] (-0.5, 0.5) -- (-1,1);
        \end{scope}
        \begin{scope}[red, decoration={zigzag, segment length=1.5mm, amplitude=0.3mm}, thick]
            \draw[decorate] (-0.5, 0.5) -- (-0.5, -0.5);
            \draw[decorate] (0.5, 0.5) -- (1,1);
            \draw[decorate] (0.5, -0.5) -- (1,-1);
        \end{scope}
        \begin{scope}[blue, thick]
            \draw[double] (0.5, 0.5) -- (-0.5, 0.5);
            \draw[double] (-0.5, -0.5) -- (0.5, -0.5);
        \end{scope}
    \end{scope} 
  \node at (-2.5, 0) {$\leftsquigarrow$};
  \node at (+7.5, 0) {$\rightsquigarrow$};    
  \begin{scope}[xshift = 5cm, rotate = 90]
        \node[black] at ( -1.3, 0) {$c$}; 
        \begin{scope}[green!50!black,decoration={coil,amplitude=0.3mm, segment length=0.5mm}, thick]
            \draw[decorate] (0.5,-0.5) -- (0.5,0.5);
            \draw[decorate] (-0.5, -0.5) -- (-1,-1);
            \draw[decorate] (-0.5, 0.5) -- (-1,1);
        \end{scope}
        \begin{scope}[red, decoration={zigzag, segment length=1.5mm, amplitude=0.3mm}, thick]
            \draw[decorate] (-0.5, 0.5) -- (-0.5, -0.5);
            \draw[decorate] (0.5, 0.5) -- (1,1);
            \draw[decorate] (0.5, -0.5) -- (1,-1);
        \end{scope}
        \begin{scope}[blue, thick]
            \draw[double] (0.5, 0.5) -- (-0.5, 0.5);
            \draw[double] (-0.5, -0.5) -- (0.5, -0.5);
        \end{scope}
    \end{scope} 
  \begin{scope}[xshift =10cm, rotate=90,  red, decoration={zigzag, segment length=1.5mm, amplitude=0.3mm}, thick]
    \node[black] at ( -1.3, 0) {$c_h$}; 
    \draw[red, decoration={zigzag, segment length=1.5mm,
    amplitude=0.3mm}, thick, decorate] (1, -1) .. controls +(-0.5, 0.5) and +(-0.5, -0.5) .. (1,1);
    \draw[green!50!black,decoration={coil,amplitude=0.3mm, segment length=0.5mm}, thick,decorate] (-1,  -1) .. controls +(0.5, 0.5) and +(0.5, -0.5) .. (-1,1);
  \end{scope}
  \begin{scope}[xshift = -5cm,  red, decoration={zigzag, segment length=1.5mm, amplitude=0.3mm}, thick]
    \node[black] at ( 0,-1.3) {$c_v$}; 
    \draw[red, decoration={zigzag, segment length=1.5mm,
    amplitude=0.3mm}, thick, decorate] (1, -1) .. controls +(-0.5, 0.5) and +(-0.5, -0.5) .. (1,1);
    \draw[green!50!black,decoration={coil,amplitude=0.3mm, segment length=0.5mm}, thick,decorate] (-1,  -1) .. controls +(0.5, 0.5) and +(0.5, -0.5) .. (-1,1);
  \end{scope}
\end{scope}}}
  \]
  In the first case, $c$ induces a coloring $c_v$ of $\Gamma^v$, hence $\Gamma^v$ is Kempe-small. Connectedness of $\Gamma_{12}(c)$, $\Gamma_{13}(c)$ and $\Gamma_{23}(c)$ follows from that of  $\Gamma^v_{12}(c_v)$, $\Gamma^v_{13}(c_v)$ and $\Gamma^v_{23}(c_v)$. 
  
  In the second case, $c$ induces a coloring $c_h$ of $\Gamma^h$, hence $\Gamma^h$ is Kempe-small. Connectedness of $\Gamma_{12}(c)$, $\Gamma_{13}(c)$ and $\Gamma_{23}(c)$ follows from that of  $\Gamma^h_{12}(c_h)$, $\Gamma^h_{13}(c_h)$ and $\Gamma^h_{23}(c_h)$. 
   
\end{proof}


Beyond the smallest examples of a circle and the theta-web, 
Kempe-small webs can be built inductively from the tetrahedral and  dodecahedral webs by blowing up vertices and forming vertex-connected sums. We don't know other examples of Kempe-small webs.


\bibliographystyle{abbrvurl}
\bibliography{biblio}

\begin{thebibliography}{10}

\bibitem{ApHa}
K.~Appel and W.~Haken.
\newblock Every planar map is four colorable. {I}. {D}ischarging.
\newblock {\em Illinois J. Math.}, 21(3):429--490, 1977.
\newblock URL: \url{http://projecteuclid.org/euclid.ijm/1256049011}.

\bibitem{BHMV}
C.~Blanchet, N.~Habegger, G.~Masbaum, and P.~Vogel.
\newblock Topological quantum field theories derived from the {K}auffman
  bracket.
\newblock {\em Topology}, 34(4):883--927, 1995.
\newblock \href {http://dx.doi.org/10.1016/0040-9383(94)00051-4}
  {\path{doi:10.1016/0040-9383(94)00051-4}}.

\bibitem{boozer2019computer}
D.~Boozer.
\newblock Computer bounds for {K}ronheimer--{M}rowka foam evaluation.
\newblock {\em \emph{To appear} in Experimental Math.}, 2019.
\newblock \href {http://arxiv.org/abs/1908.07133} {\path{arXiv:1908.07133}}.

\bibitem{FISK1977298}
S.~Fisk.
\newblock Geometric coloring theory.
\newblock {\em Advances in Math.}, 24(3):298--340, 1977.
\newblock \href {http://dx.doi.org/10.1016/0001-8708(77)90061-5}
  {\path{doi:10.1016/0001-8708(77)90061-5}}.

\bibitem{SL3}
M.~Khovanov.
\newblock sl(3) link homology.
\newblock {\em Algebr. Geom. Topol.}, 4:1045--1081, 2004.
\newblock \href {http://dx.doi.org/10.2140/agt.2004.4.1045}
  {\path{doi:10.2140/agt.2004.4.1045}}.

\bibitem{KR1}
M.~Khovanov and L.-H. Robert.
\newblock Foam evaluation and {K}ronheimer--{M}rowka theories.
\newblock {\em \emph{To appear} in Advances in Math.}, 2018.
\newblock \href {http://arxiv.org/abs/1808.09662} {\path{arXiv:1808.09662}}.

\bibitem{KM2}
P.~{Kronheimer} and T.~{Mrowka}.
\newblock Exact triangles for {$SO(3)$} instanton homology of webs.
\newblock {\em J. Topol.}, 9(3):774--796, 2016.
\newblock \href {http://dx.doi.org/10.1112/jtopol/jtw010}
  {\path{doi:10.1112/jtopol/jtw010}}.

\bibitem{KM3}
P.~B. Kronheimer and T.~S. Mrowka.
\newblock A deformation of instanton homology for webs.
\newblock {\em Geom. Topol.}, 23(3):1491--1547, 2019.
\newblock \href {http://dx.doi.org/10.2140/gt.2019.23.1491}
  {\path{doi:10.2140/gt.2019.23.1491}}.

\bibitem{KM1}
P.~B. Kronheimer and T.~S. Mrowka.
\newblock Tait colorings, and an instanton homology for webs and foams.
\newblock {\em J. Eur. Math. Soc.}, 21(1):55--119, 2019.
\newblock \href {http://dx.doi.org/10.4171/JEMS/831}
  {\path{doi:10.4171/JEMS/831}}.

\bibitem{Kuperberg}
G.~Kuperberg.
\newblock Spiders for rank {$2$} {L}ie algebras.
\newblock {\em Comm. Math. Phys.}, 180(1):109--151, 1996.
\newblock URL: \url{http://projecteuclid.org/euclid.cmp/1104287237}.

\bibitem{MackaayVaz}
M.~Mackaay and P.~Vaz.
\newblock The universal {${\rm sl}_3$}-link homology.
\newblock {\em Algebr. Geom. Topol.}, 7:1135--1169, 2007.
\newblock \href {http://dx.doi.org/10.2140/agt.2007.7.1135}
  {\path{doi:10.2140/agt.2007.7.1135}}.

\bibitem{MoNi}
S.~Morrison and A.~Nieh.
\newblock On {K}hovanov's cobordism theory for {$\mathfrak{su}_3$} knot
  homology.
\newblock {\em J. Knot Theory Ramifications}, 17(9):1121--1173, 2008.
\newblock \href {http://dx.doi.org/10.1142/S0218216508006555}
  {\path{doi:10.1142/S0218216508006555}}.

\bibitem{LHRThese}
L.-H. Robert.
\newblock {\em Sur l'homologie $\mathfrak{sl}_3$ des enchevêtrements;
  algèbres de {K}hovanov--{K}uperberg}.
\newblock PhD thesis, Université Paris 7 --- Denis Diderot, July 2013.
\newblock URL: \url{https://hal.archives-ouvertes.fr/tel-02496964/}.

\bibitem{RW1}
L.-H. Robert and E.~Wagner.
\newblock A closed formula for the evaluation of foams.
\newblock {\em Quantum Topol.}, 11(3):411--487, 2020.
\newblock \href {http://dx.doi.org/10.4171/qt/139} {\path{doi:10.4171/qt/139}}.

\end{thebibliography}

\end{document}